\documentclass[12pt]{amsart}

\usepackage{amsfonts, amssymb, amsmath, latexsym,  mathtools, amscd,mathrsfs}
\usepackage{hyperref,graphicx}
\usepackage{xcolor}
\usepackage{url}
\usepackage{listings}
\usepackage{wrapfig}
\usepackage{subcaption,cleveref}
\usepackage{cite}
\usepackage{tikz}

\newtheorem{Theorem}{Theorem}[section]

\newtheorem{Lemma}[Theorem]{Lemma}
\newtheorem{Corollary}[Theorem]{Corollary}

\theoremstyle{remark}
\newtheorem{Remark}[Theorem]{Remark}

\newcommand{\CC}{{\mathbb C}}

\newcommand{\PP}{{\mathbb P}}

\newcommand{\ZZ}{{\mathbb Z}}

\newcommand{\calU}{{\mathcal U}}
\newcommand{\calX}{{\mathcal X}}
\newcommand{\calR}{{\mathcal R}}

\newcommand{\calN}{{\mathcal N}}
\newcommand{\conv}{{conv}}

\title[Reducibility for Finite Jacobi Pencils]{Reducibility and Diagonal Equality Strata for Finite Jacobi Pencils}

\author{Matthew Faust}
\address{Matthew Faust, Department of Mathematics, Michigan State University, East Lansing, MI 48824, USA} \email{mfaust@msu.edu}
\urladdr{https://mattfaust.github.io/}

\subjclass[2020]{47B36, 15A22, 12D05, 52B20.}

\keywords{determinantal polynomials,  Jacobi pencils, reducibility. }

\begin{document}

\begin{abstract}
We study reducibility of the characteristic polynomials of a class of finite Jacobi pencils.  We classify the diagonal equality patterns for which generic connected couplings yield reducibility and use this classification to prove that for $n\ge4$, the connected reducible locus has codimension at least two in the connected parameter space. For $n=2m+1$, we also prove that for every fixed connected coupling vector, the locus of diagonal potentials that admit a constant branch has codimension $m$.
\end{abstract}
\maketitle 
    

\section{Introduction}
We consider reducibility of the characteristic polynomial of Jacobi matrices of the form
\[J_n(w)=
\begin{pmatrix}
 a_1 & wb_1 & 0 & \cdots & 0\\
 wb_1 & a_2 & wb_2 & \ddots & \vdots\\
 0 & wb_2 & a_3 & \ddots & 0\\
 \vdots & \ddots & \ddots & \ddots & wb_{n-1}\\
 0&\cdots&0&wb_{n-1}&a_n
\end{pmatrix}.\]
We use the sign convention
\[  \chi_n(w,\lambda)=\det(J_n(w)+\lambda I). \]
Since $w$ only appears in the off-diagonal entries, $\chi_n$ is even in $w$.
We therefore write
\[ \chi_n(w,\lambda)=P_n(t,\lambda),\qquad t=w^2,\]
where
\[  P_0=1,\qquad P_1=\lambda+a_1,\qquad P_k=(\lambda+a_k)P_{k-1}-t c_{k-1}P_{k-2},\]
with $c_i=b_i^2$. 

A recent preprint of Shapiro~\cite{shapiro2026reducibilityspectralcurvesfinite} studies when $\chi_n(w,\lambda)$ is reducible.
Shapiro first proves that, for fixed pairwise-distinct diagonal entries $a_1,\ldots,a_n$, the polynomial $\chi_n$ is irreducible for all $b=(b_1,\ldots,b_{n-1})$ outside a proper algebraic subset~\cite[Theorem 2.1]{shapiro2026reducibilityspectralcurvesfinite}.
The same paper then proposes that, for $n\ge4$, the only codimension-one components of the reducible locus are the cut hyperplanes $b_i=0$ (see 
~\cite[Conjecture 4.3]{shapiro2026reducibilityspectralcurvesfinite}).

In this paper, we prove \cite[Conjecture 4.3]{shapiro2026reducibilityspectralcurvesfinite} (Theorem~\ref{thm:no-divisors-full}). In addition to this, for any fixed connected coupling, we give the dimension of the class of odd length potentials that admit constant branches (Theorem~\ref{thm:constant-branches-fixed-couplings}). We also characterize generic reducibility for arbitrary fixed diagonal data: in odd length, the odd-indexed diagonal entries must all coincide, while in even length, each parity class must be constant (Theorem~\ref{thm:sec8main} and Corollary~\ref{cor:inheritance}). We note that the methods used are close in spirit to those developed for studying the reducibility of dispersion polynomials~\cite{Liu2022Fermi, FaustLopezGarcia2025, FillmanLiuMatos2022, faust2026genericirreducibilityblochvarieties}. Indeed, one can view $\chi_n(w,\lambda)$ as the dispersion polynomial of the discrete operator $J_n(w) = A + w\Delta_b$ on an $n$ vertex path graph, where $A$ is a diagonal matrix with entries $a_i$ (the potential) and $\Delta_b$ is a weighted graph adjacency operator with couplings $b_i$.

The paper is organized as follows.
In Section~\ref{sec:basic}, we collect elementary reductions.
In Section~\ref{sec:newton-polytope}, we use Newton polytopes to constrain possible factors.
In Section~\ref{sec:constant-branches}, we classify constant branches on the connected pairwise-distinct stratum and give the dimension count for constant branches.
In Section~\ref{sec:pairwise-distinct-strata}, we derive a first-order obstruction for each Hensel subset and prove the codimension bound for fixed pairwise-distinct diagonal data.
In Section~\ref{sec:generic-fixed-diagonal}, we classify generic reducibility for arbitrary fixed diagonal data, both for $P_n$ and for $\chi_n$.
Finally, in Section~\ref{sec:dimension-bound}, we combine these results to prove the codimension bound on the full connected parameter space.

\begin{Remark}
    This manuscript was prepared in response to the first version of Shapiro’s preprint~\cite{shapiro2026reducibilityspectralcurvesfinite}. 
    A second version appeared~\cite{shapiro2026reducibilityspectralcurvesfinite2} while this manuscript was in preparation and contains some overlapping preliminary results.
    To our knowledge, the main results of this manuscript (Theorem~\ref{thm:constant-branches-fixed-couplings},  Theorem~\ref{thm:sec8main},  Corollary~\ref{cor:inheritance}, and Theorem~\ref{thm:no-divisors-full}) are new.
    We also note that these results do not require the diagonal entries of $J_n$ to be pairwise distinct, which is the primary setting of ~\cite{shapiro2026reducibilityspectralcurvesfinite2}. 
    For completeness, Sections~\ref{sec:basic}, \ref{sec:newton-polytope}, and \ref{sec:constant-branches} retain some results that overlap with the revised preprint.
\end{Remark}

\section{Basic Reductions}\label{sec:basic}

We begin by collecting notation and making some elementary reductions.
We call $\chi_n(w,\lambda)$ the spectral polynomial, $P_n(t,\lambda)$ the reduced continuant, and write
\[ A_i=\lambda+a_i.\]
For a contiguous block $[r,s]$, we write $P_{r,s}(t,\lambda)$ for the corresponding reduced continuant. 
That is, $P_{r,s}(t,\lambda)=\det(J_{r,s}(\sqrt{t})+\lambda I)$, where
\[J_{r,s}(w)=
\begin{pmatrix}
 a_r & wb_{r} & 0 & \cdots & 0\\
 wb_r & a_{r+1} & wb_{r+1} & \ddots & \vdots\\
 0 & wb_{r+1} & a_{r+2} & \ddots & 0\\
 \vdots & \ddots & \ddots & \ddots & wb_{s-1}\\
 0&\cdots&0&wb_{s-1}&a_s
\end{pmatrix}.\]

Recall that $c_i = b_i^2$, we say the chain is connected if
\[  c_1\cdots c_{n-1}\neq0.\]
If $c_i=0$, the chain cuts and the characteristic polynomial factors as the product of the two subchain characteristic polynomials.
We write
\[  T_c=(\CC^*)^{n-1}\]
for the connected coupling torus with coordinates $c_1,\ldots,c_{n-1}$.
Unless explicitly stated otherwise, we work over $T_c$.

\begin{Lemma}\label{lem:elementary-invariances}
The affine change $\lambda\mapsto\lambda+s$ preserves reducibility in both $\CC[w,\lambda]$ and $\CC[t,\lambda]$.
In particular, shifting all diagonal entries by the same scalar does not change reducibility.

If all diagonal entries are equal, say $a_1=\cdots=a_n=a$, then
\[ \chi_n(w,\lambda)=\det((\lambda+a)I+wB)  =\prod_{\mu\in\text{Spec}(B)}(\lambda+a+\mu w)\]
over $\CC$.
\end{Lemma}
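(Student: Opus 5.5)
The plan is to prove the two assertions separately, since both are essentially formal.

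For the first assertion, consider the substitution $\phi_s:\CC[w,\lambda]\to\CC[w,\lambda]$ given by $\lambda\mapsto\lambda+s$, $w\mapsto w$. It is a $\CC$-algebra automorphism, with inverse $\phi_{-s}$. It also preserves total degree in $\lambda$ and degree in $w$, so it maps units to units and nonunits to nonunits. Hence a factorization $f=gh$ into nonunits is carried to $\phi_s(f)=\phi_s(g)\phi_s(h)$, again a factorization into nonunits, and $f$ is reducible if and only if $\phi_s(f)$ is. The same argument works verbatim in $\CC[t,\lambda]$. Next observe that $\det(J_n(w)+sI+\lambda I)=\chi_n(w,\lambda+s)$, because adding $s$ to every diagonal entry replaces each $A_i$ by $A_i+s$. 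Equivalently, in the recursion for $P_k$ we may substitute $\lambda\mapsto\lambda+s$ throughout. So shifting all $a_i$ by $s$ yields $\phi_s(\chi_n)$ and $\phi_s(P_n)$, and reducibility is unchanged in both rings. One could worry that "reducible" might be intended to allow constant factors, but $\chi_n$ and $P_n$ are monic in $\lambda$ of degree $n$, so this causes no ambiguity.

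For the second assertion, write $J_n(w)=aI+wB$ with $B$ the symmetric tridiagonal matrix having zero diagonal and off-diagonal entries $b_i$. Then
\[\chi_n(w,\lambda)=\det\bigl((\lambda+a)I+wB\bigr).\]
Triangularize $B$ over $\CC$ (Schur form, or simply its Jordan form) as $B=S\,U\,S^{-1}$ with $U$ upper triangular whose diagonal lists the eigenvalues $\mu_1,\dots,\mu_n$ of $B$ with algebraic multiplicity. Then
\[(\lambda+a)I+wB=S\bigl((\lambda+a)I+wU\bigr)S^{-1},\]
and the determinant of the upper triangular middle factor is $\prod_i(\lambda+a+\mu_i w)$. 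This identity holds in $\CC[w,\lambda]$, not merely pointwise, because $S$ is constant. Note that $B$ need not be diagonalizable when the $b_i$ are complex, which is why the triangular form is used rather than diagonalization. The product is understood over $\mathrm{Spec}(B)$ counted with multiplicity.

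There is no real obstacle here. The only point needing care is the multiplicity and non-diagonalizability remark just made, together with the observation that the shift is a degree-preserving automorphism, so that irreducible elements correspond to irreducible elements.
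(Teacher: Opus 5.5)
Your proof is correct and matches the paper's. The first part is the observation that $\lambda\mapsto\lambda+s$ is a ring automorphism of $\CC[w,\lambda]$ and of $\CC[t,\lambda]$ carrying $\chi_n$ and $P_n$ to their diagonally shifted versions, and the second is the eigenvalue factorization of the scalar-diagonal pencil $(\lambda+a)I+wB$; your use of a triangular form with eigenvalues counted with algebraic multiplicity usefully makes the paper's one-line argument precise, since a complex symmetric tridiagonal $B$ need not be diagonalizable.
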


\begin{proof}
The first statement is applying a polynomial-ring automorphism.
The second is the ordinary eigenvalue factorization of the scalar diagonal pencil.
\end{proof}

\begin{Lemma}\label{lem:matching-support}
For a block $[r,s]$,
\[ P_{r,s}(t,\lambda)  =  \sum_M  (-t)^{|M|}  \left(\prod_{e\in M}c_e\right)  \prod_{j\in[r,s]\setminus V(M)}A_j,\]
where the sum is over matchings of the path $[r,s]$.
Consequently, if $t^k\lambda^\ell$ appears in $P_n$, then
\[  2k+\ell\le n. \]
Equivalently, if $w^r\lambda^\ell$ appears in $\chi_n$, then $r$ is even and
\[ r+\ell\le n.\]
\end{Lemma}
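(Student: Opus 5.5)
We prove the matching expansion by induction on the block length $s-r+1$, using the three-term recurrence for continuants. For a block $[r,s]$, expanding $\det(J_{r,s}(\sqrt t)+\lambda I)$ along the last row gives
\[
P_{r,s}=A_s\,P_{r,s-1}-t\,c_{s-1}\,P_{r,s-2},
\]
with the conventions $P_{r,r-1}=1$ and $P_{r,r-2}=0$. This is the block analogue of the recurrence defining $P_k$. The base cases are immediate: for the empty block the only matching is $M=\emptyset$, giving $1$; for a single vertex it gives $A_r$.

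For the inductive step, split the matchings $M$ of the path $[r,s]$ according to whether vertex $s$ is covered. If $s\notin V(M)$, then $M$ is a matching of $[r,s-1]$, and the factor $A_s$ appears in $\prod_{j\notin V(M)}A_j$. Summing these terms gives $A_sP_{r,s-1}$ by induction. If $s\in V(M)$, then $M$ contains the edge $\{s-1,s\}$, since this is the only edge at $s$. Removing that edge gives a bijection onto matchings $M'$ of $[r,s-2]$, under which $|M|=|M'|+1$ and the weight gains a factor $-t\,c_{s-1}$. These terms therefore sum to $-t\,c_{s-1}P_{r,s-2}$. Adding the two cases yields the recurrence, which closes the induction.

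For the degree bound, consider the term indexed by a matching $M$. It has $t$-degree exactly $|M|$. Its $\lambda$-degree is at most the number of uncovered vertices, namely $n-2|M|$, because each $A_j=\lambda+a_j$ has degree one in $\lambda$. So every monomial $t^k\lambda^\ell$ appearing in that term satisfies $k=|M|$ and $\ell\le n-2k$. Cancellation between terms can only remove monomials, so every monomial appearing in $P_n=P_{1,n}$ satisfies $2k+\ell\le n$.

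The statement for $\chi_n$ follows by substituting $t=w^2$. The monomial $t^k\lambda^\ell$ becomes $w^{2k}\lambda^\ell$, so every exponent $r$ of $w$ is even and $r+\ell=2k+\ell\le n$.

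Nothing here is a real obstacle. The only point needing care is the bookkeeping in the bijection for the case where $s$ is covered, together with the empty-block conventions that make the recurrence valid for blocks of length one and two.
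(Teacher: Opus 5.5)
Your proof is correct, but it reaches the matching formula by a different route than the paper. The paper reads the formula off directly from the Leibniz expansion. In a tridiagonal determinant, the only permutations with a nonzero contribution satisfy $|\sigma(i)-i|\le 1$, so they are products of disjoint adjacent transpositions $(i,i+1)$, that is, matchings of the path. Each such transposition contributes the sign $-1$ and the factor $(wb_i)^2=tc_i$.

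You instead obtain the three-term recurrence by cofactor expansion along the last row. You then check that the matching sum satisfies the same recurrence and initial values, splitting on whether the last vertex is covered.

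The paper's argument is a one-step identification, but it implicitly relies on the small combinatorial fact about permutations with $|\sigma(i)-i|\le 1$. Yours avoids that fact. It also ties the formula directly to the recurrence the paper uses to define $P_k$, which shows that definition agrees with the determinant $P_{r,s}$. The block version comes for free in your setup.

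The degree bound and the passage to $\chi_n$ via $t=w^2$ are the same in both proofs. One small point: the convention $P_{r,r-2}=0$ is never actually used, since you take the single-vertex block as a base case.
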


\begin{proof}
The matching formula follows directly from the Leibniz expansion of the determinant.
The support inequalities are immediate, since a matching of size $k$ removes $2k$ vertices from the product of the $A_i$.
\end{proof}

\begin{Lemma}\label{lem:universal-support}
Let $F(t,\lambda)$ be a monic factor of $P_n(t,\lambda)$ of $\lambda$-degree $d$.
Write
\[ F(t,\lambda) =  \lambda^d+f_1(t)\lambda^{d-1}+\cdots+f_d(t).\]
Then
\[ \deg_t f_j\le \left\lfloor\frac j2\right\rfloor.\]
Equivalently, if
\[ F(t,\lambda)=F_0(\lambda)+tF_1(\lambda)+t^2F_2(\lambda)+\cdots,\]
then
\[ \deg_\lambda F_k\le d-2k.\]
In particular, monic linear factors are independent of $t$, and monic quadratic factors on the pairwise-distinct stratum have the form
\[ A_iA_j-\eta t.\]
\end{Lemma}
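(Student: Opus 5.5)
We prove the statement with a weighted-degree (Newton polytope) argument. Give $\lambda$ weight $1$ and $t$ weight $2$, and for a polynomial $G(t,\lambda)$ let $\mathrm{wdeg}\,G$ be the maximum of $2k+\ell$ over monomials $t^k\lambda^\ell$ in its support. By Lemma~\ref{lem:matching-support}, every monomial of $P_n$ has weight at most $n$. The weight-$n$ part of $P_n$ is obtained from the matching expansion by replacing each $A_j$ with $\lambda$, since the constants $a_j$ only lower the weight. It equals
\[
Q(t,\lambda)=\sum_M(-t)^{|M|}\Bigl(\prod_{e\in M}c_e\Bigr)\lambda^{n-2|M|},
\]
which is nonzero because it contains $\lambda^n$.

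Next, write $P_n=FG$ with $F$ monic of $\lambda$-degree $d$ and $G$ monic of $\lambda$-degree $n-d$; the cofactor is monic because $P_n$ is monic in $\lambda$. For any weight the top weighted-homogeneous part of a product is the product of the top parts, since $\CC[t,\lambda]$ is a domain. Hence
\[
\mathrm{wdeg}\,F+\mathrm{wdeg}\,G=\mathrm{wdeg}\,P_n=n.
\]
Monicity gives $\mathrm{wdeg}\,F\ge d$ (from the monomial $\lambda^d$) and $\mathrm{wdeg}\,G\ge n-d$. Both inequalities must therefore be equalities, so $\mathrm{wdeg}\,F=d$. Each monomial $t^k\lambda^{d-j}$ appearing in $f_j(t)\lambda^{d-j}$ then satisfies $2k+d-j\le d$, that is $k\le j/2$. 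This gives $\deg_t f_j\le\lfloor j/2\rfloor$. The reformulation $\deg_\lambda F_k\le d-2k$ is the same inequality $2k+\ell\le d$, read off by grouping the terms of $F$ by powers of $t$.

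For the special cases, take $d=1$ first. Then $\deg_t f_1\le0$, so $F=\lambda+f_1$ with $f_1$ constant, and the factor is independent of $t$.

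Now take $d=2$. We have $F=\lambda^2+\alpha\lambda+\beta-\eta t$ with constants $\alpha,\beta,\eta$. Setting $t=0$ gives $P_n(0,\lambda)=\prod_j A_j$. Hence $F(0,\lambda)$ is a monic quadratic dividing $\prod_j(\lambda+a_j)$. On the pairwise-distinct stratum its two roots are distinct values $-a_i,-a_j$ with $i\neq j$, so $F(0,\lambda)=A_iA_j$ and $F=A_iA_j-\eta t$.

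The only step that needs care is the additivity of weighted degree. I would justify it by noting that the leading weighted-homogeneous forms multiply to a nonzero form. With that established, everything else is bookkeeping.
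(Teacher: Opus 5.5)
Your proof is correct and is essentially the paper's argument: the paper uses $N(P_n)=N(F)+N(G)$ with $(0,n-d)\in N(G)$ and the halfspace bound $2k+\ell\le n$, which is exactly your additivity of the $(2,1)$-weighted degree (the support function of the Newton polytopes in that direction) combined with $\mathrm{wdeg}\,G\ge n-d$. The special cases $d=1,2$ are handled the same way, via $P_n(0,\lambda)=\prod_i A_i$.
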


\begin{proof}
Let $P_n=FG$, where $G$ is monic of $\lambda$-degree $n-d$.
The Newton polytope of a product is the Minkowski sum of the Newton polytopes of the factors, and $P_n$ is supported in the halfspace $2k+\ell\le n$ by Lemma~\ref{lem:matching-support}.
Since $G$ is monic, $(0,n-d)\in N(G)$.
Thus, for every $(k,\ell)\in N(F)$, the point $(k,\ell+n-d)$ lies in $N(P_n)$, and hence
\[  2k+\ell+n-d\le n.\]
Therefore, $2k+\ell\le d$, which is the claimed support bound for $F$.
The final assertions are the cases $d=1$ and $d=2$, together with the specialization
\[  P_n(0,\lambda)=\prod_i A_i.\]
\end{proof}

\begin{Lemma}\label{lem:hensel-subsets}
Assume the $a_i$ are pairwise distinct.
If
\[ \chi_n(w,\lambda)=F(w,\lambda)G(w,\lambda)\]
with $F,G$ monic in $\lambda$, then $F$ and $G$ are even in $w$.
Hence, every factorization of $\chi_n(w,\lambda)$ descends to a factorization of $P_n(t,\lambda)$, and conversely.

Moreover, after setting $w=0$, a monic factor determines a unique subset
\[ S\subset\{1,\ldots,n\}\]
by
\[ F(0,\lambda)=\prod_{i\in S}A_i.\]
\end{Lemma}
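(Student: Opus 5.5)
The plan is to exploit the involution $\sigma\colon w\mapsto -w$ together with the fact that $\chi_n(0,\lambda)=\prod_i A_i$ is squarefree when the $a_i$ are pairwise distinct. The idea is that a monic factor of $\chi_n$ is determined by its reduction at $w=0$. Then $F(w,\lambda)$ and $F(-w,\lambda)$, which share that reduction, must coincide.

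\emph{Step 1: power-series roots.} I would first show that over $\CC[[w]]$ the polynomial $\chi_n$ splits into distinct linear factors. Since $\chi_n(0,\lambda)=\prod_i(\lambda+a_i)$ has $n$ simple roots $-a_i$, we have $\partial_\lambda\chi_n(0,-a_i)\neq0$ for each $i$. The implicit function theorem (equivalently, Hensel's lemma in $\CC[[w]][\lambda]$) then gives unique power series $\lambda_i(w)\in\CC[[w]]$ with $\lambda_i(0)=-a_i$ and $\chi_n(w,\lambda_i(w))=0$. As $\chi_n$ is monic of $\lambda$-degree $n$, this yields
\[\chi_n(w,\lambda)=\prod_{i=1}^n(\lambda-\lambda_i(w))\quad\text{in }\CC[[w]][\lambda].\]

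\emph{Step 2: monic factors are products over subsets.} Since $\CC[[w]][\lambda]$ is a UFD and the factors $\lambda-\lambda_i(w)$ are pairwise non-associate primes, any monic divisor $F$ of $\chi_n$ equals $\prod_{i\in S}(\lambda-\lambda_i(w))$ for a unique subset $S\subset\{1,\ldots,n\}$. Setting $w=0$ gives $F(0,\lambda)=\prod_{i\in S}A_i$. Conversely, $S$ is recovered from $F(0,\lambda)$ because the $A_i$ are distinct monic linear factors. This proves the final assertion of the statement.

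\emph{Step 3: evenness.} Because $\chi_n$ is even in $w$, we have $\chi_n(w,\lambda)=\chi_n(-w,\lambda)=F(-w,\lambda)G(-w,\lambda)$. Thus $F(-w,\lambda)$ is also a monic factor of $\chi_n$, and it has the same value at $w=0$, namely $\prod_{i\in S}A_i$. By Step 2 both $F(w,\lambda)$ and $F(-w,\lambda)$ equal $\prod_{i\in S}(\lambda-\lambda_i(w))$, so $F(-w,\lambda)=F(w,\lambda)$. Then $G=\chi_n/F$ is even as well.

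\emph{Step 4: descent and converse.} A polynomial even in $w$ is a polynomial in $t=w^2$. Hence $F=\tilde F(w^2,\lambda)$ and $G=\tilde G(w^2,\lambda)$, which gives $P_n=\tilde F\tilde G$ in $\CC[t,\lambda]$, with both factors monic of the same $\lambda$-degrees. Conversely, substituting $t=w^2$ into any factorization of $P_n$ gives one of $\chi_n$.

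The only real content is Step 1, the uniqueness of lifts. This is exactly where pairwise distinctness is used: without it $\chi_n(0,\lambda)$ is not squarefree, and the argument breaks down. Lemma~\ref{lem:elementary-invariances} illustrates this, since there the factors $\lambda+a+\mu w$ are not even in $w$.
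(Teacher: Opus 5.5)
Your proof is correct and takes the same approach as the paper: $\chi_n(0,\lambda)=\prod_i A_i$ is squarefree, the lift of each subset is unique by Hensel, and the involution $w\mapsto -w$ then forces $F(-w,\lambda)=F(w,\lambda)$. The paper invokes ``Hensel uniqueness'' in one line, and your factorization into power-series roots $\lambda_i(w)\in\CC[[w]]$ spells out that step.
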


\begin{proof}
At $w=0$,
\[ \chi_n(0,\lambda)=\prod_{i=1}^n A_i\]
is squarefree.
Thus, the specialization of a monic factor is the product of a unique subset of the $A_i$.
The involution $w\mapsto -w$ preserves $\chi_n$, and Hensel uniqueness at $w=0$ fixes this subset.
Therefore
\[ F(-w,\lambda)=F(w,\lambda),\]
and similarly for $G$.
\end{proof}

\begin{Lemma}\label{lem:odd-w-repeated-values}
If an irreducible factor $f(w,\lambda)$ of $\chi_n$ is not even in $w$, then $f(w,\lambda)$ and $f(-w,\lambda)$ are distinct factors with the same specialization at $w=0$.
Hence $f(0,\lambda)^2$ divides \[  \prod_i(\lambda+a_i).\]
In particular, the diagonal multiset must contain enough repeated values to support the degree of $f(0,\lambda)$, counted with multiplicity.
\end{Lemma}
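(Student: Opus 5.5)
The plan is to use the involution $\sigma\colon w\mapsto -w$, which fixes $\chi_n$ because $\chi_n$ is even in $w$. Since $\sigma$ is a ring automorphism of $\CC[w,\lambda]$ preserving $\lambda$-degree and leading $\lambda$-coefficients, it permutes the irreducible factors of $\chi_n$. Normalize so that $\chi_n$ is monic in $\lambda$; then its irreducible factors can be taken monic in $\lambda$, because any factor of a monic polynomial in $\lambda$ has a constant leading coefficient in $\lambda$.

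Suppose $f$ is an irreducible monic factor with $f(-w,\lambda)\neq f(w,\lambda)$. The polynomial $f(-w,\lambda)$ is again irreducible, monic, and divides $\chi_n(-w,\lambda)=\chi_n(w,\lambda)$. Two distinct monic irreducibles are non-associate, hence coprime in the UFD $\CC[w,\lambda]$. Therefore their product $f(w,\lambda)f(-w,\lambda)$ divides $\chi_n$. Write $\chi_n=f(w,\lambda)f(-w,\lambda)\,h$ with $h$ monic in $\lambda$.

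Now specialize at $w=0$. Both $f(w,\lambda)$ and $f(-w,\lambda)$ specialize to $f(0,\lambda)$, so they are distinct factors with the same specialization. Using $\chi_n(0,\lambda)=\prod_i(\lambda+a_i)$ from Lemma~\ref{lem:matching-support} (or directly, since $J_n(0)$ is diagonal), we get $f(0,\lambda)^2 h(0,\lambda)=\prod_i(\lambda+a_i)$. Hence $f(0,\lambda)^2$ divides this product.

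Since $f(0,\lambda)$ is monic of degree $\deg_\lambda f$, it is a product of some of the linear factors $\lambda+a_i$. Its square dividing $\prod_i(\lambda+a_i)$ means each root $-a$ of $f(0,\lambda)$, of multiplicity $m$, needs $a$ to occur at least $2m$ times in the diagonal multiset. This gives the final assertion; in particular $2\deg_\lambda f\le n$, and at least one value must be repeated.

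The only point needing care is the coprimality and monicity bookkeeping, so that distinctness really yields divisibility by the product. The remaining steps are routine.
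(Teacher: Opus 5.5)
Your proposal is correct and follows the paper's argument: use the involution $w\mapsto -w$, note that $f(w,\lambda)$ and $f(-w,\lambda)$ are distinct factors with the same value at $w=0$, and read off a square factor of $\chi_n(0,\lambda)=\prod_i(\lambda+a_i)$. You also spell out two points the paper leaves implicit. First, you normalize to monic, so that ``not even'' means non-associate. Second, you invoke coprimality of distinct monic irreducibles in the UFD $\CC[w,\lambda]$, which guarantees that the product $f(w,\lambda)f(-w,\lambda)$ itself divides $\chi_n$.
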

\begin{proof}
The involution $w\mapsto -w$ sends factors to factors.
If $f$ is not even, then $f(w,\lambda)$ and $f(-w,\lambda)$ are distinct, but they have the same specialization at $w=0$.
Therefore, their product contributes a square factor to $\chi_n(0,\lambda)$.
\end{proof}

On the pairwise-distinct diagonal stratum, Lemma~\ref{lem:hensel-subsets} allows us to attach a unique subset $S\subset[n]$ to any monic factor by specializing $w=0$.
Following~\cite{shapiro2026reducibilityspectralcurvesfinite}, we call this subset the Hensel subset of the factor.
Equivalently, a factorization has Hensel subset $S$ if one of its monic factors specializes to $\prod_{i\in S}A_i$ at $w=0$, or to the same product after setting $t=0$ for $P_n$.

\begin{Lemma}\label{lem:orbit}
Let $P(t,\lambda)\in \CC[t,\lambda]$ be monic in $\lambda$ and irreducible in $\CC[t,\lambda]$, and set
\[  X(w,\lambda)=P(w^2,\lambda).\]
If $X$ is reducible, then its irreducible factors are permuted by $w\mapsto -w$.
The product over any orbit descends to $\CC[t,\lambda]$, and thus is a factor of $P$.
Therefore, if $P$ is irreducible, then either $X$ is irreducible or
\[  X(w,\lambda)=f(w,\lambda)f(-w,\lambda).\]
\end{Lemma}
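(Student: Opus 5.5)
We argue with the Galois-type action of the involution $\sigma\colon w\mapsto -w$ on $\CC[w,\lambda]$, using unique factorization. Since $X(w,\lambda)=P(w^2,\lambda)$ is invariant under $\sigma$, and $\sigma$ is a ring automorphism, $\sigma$ sends irreducible factors of $X$ to irreducible factors of $X$. Because $X$ is monic in $\lambda$, we may normalize every irreducible factor to be monic in $\lambda$. Then $\sigma$ preserves monicity, so it genuinely permutes the multiset of monic irreducible factors of $X$, with multiplicities. This gives the first assertion.

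For the descent step, fix an orbit $\{f\}$ or $\{f,\sigma f\}$ and let $g$ be the product over the orbit, taken with the multiplicities occurring in $X$. Then $\sigma g=g$, so $g$ is even in $w$: every monomial $w^r\lambda^\ell$ appearing in $g$ has $r$ even. Hence $g(w,\lambda)=h(w^2,\lambda)$ for a unique $h\in\CC[t,\lambda]$. Write $X=g\cdot k$. The cofactor $k$ is also $\sigma$-invariant, being a product over the remaining orbits, so $k(w,\lambda)=\tilde k(w^2,\lambda)$. Since the substitution $t\mapsto w^2$ is an injective ring homomorphism $\CC[t,\lambda]\to\CC[w,\lambda]$, the identity $X=gk$ pulls back to $P=h\tilde k$. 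Thus $h$ is a factor of $P$ that is monic in $\lambda$.

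For the final dichotomy, suppose $P$ is irreducible. Any orbit product $h$ is then either a unit or all of $P$. A unit is impossible, since $h$ is monic of positive $\lambda$-degree, unless $P$ itself is constant in $\lambda$. In that degenerate case $P$ is a monic polynomial of $\lambda$-degree $0$, i.e. $P=1$, which is a unit rather than irreducible, so it is excluded. Hence there is exactly one orbit, carrying all of $X$ with multiplicities, and we distinguish two cases.

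\emph{Orbit of size one.} Then $X=f^m$ with $f$ even, so $f=\tilde f(w^2,\lambda)$ and $P=\tilde f^m$. Irreducibility of $P$ forces $m=1$, so $X$ is irreducible.

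\emph{Orbit of size two.} Then $X=f^m(\sigma f)^m$. Pulling back as above gives $P=\big(\text{descent of }f\,\sigma f\big)^m$, so again $m=1$ and $X=f(w,\lambda)f(-w,\lambda)$.

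The only delicate point is the bookkeeping of multiplicities and the monic normalization. Monicity guarantees that $\sigma$ acts on factors exactly, rather than only up to the scalar $-1$ that could otherwise appear. Everything else is formal.
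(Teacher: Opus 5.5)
Your proof is correct and follows essentially the same route as the paper: $w\mapsto -w$ permutes the irreducible factors of $X$, each orbit product is even in $w$ and so descends to a factor of $P$, and irreducibility of $P$ then leaves a single orbit, of size one or two. Your version is more careful than the paper's terse argument. You normalize the factors to be monic in $\lambda$, so the involution acts on them exactly. You also track multiplicities, which is what rules out $X=f^m$ or $X=\bigl(f(w,\lambda)f(-w,\lambda)\bigr)^m$ with $m>1$; the paper leaves both points implicit.
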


\begin{proof}
The involution $w\mapsto -w$ acts on the irreducible factors of $X$.
The product over an orbit is invariant under this involution, hence belongs to $\CC[w^2,\lambda]=\CC[t,\lambda]$.
Since $P$ is irreducible, there can be only one orbit unless $X$ splits as a conjugate pair.
\end{proof}

\begin{Corollary}\label{cor:chi-from-P}
If $P_n$ is irreducible and $n$ is odd, then $\chi_n$ is irreducible.
If $P_n$ is irreducible and $n$ is even, then either $\chi_n$ is irreducible or
\[  \chi_n(w,\lambda)=f(w,\lambda)f(-w,\lambda),\]
where both factors have $\lambda$-degree $n/2$.

Moreover, if $g(t,\lambda)$ is an irreducible factor of $P_n$ of odd $\lambda$-degree, then $g(w^2,\lambda)$ is irreducible in $\CC[w,\lambda]$.
\end{Corollary}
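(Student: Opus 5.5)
The plan is to apply Lemma~\ref{lem:orbit} with $P=P_n$ and $X=\chi_n$; all the real content is already there, and what remains is a degree count. Assume $P_n$ is irreducible and $\chi_n$ is reducible. Since $P_n$ is monic in $\lambda$ (the top term of the matching expansion in Lemma~\ref{lem:matching-support} is $\prod_i A_i$), Lemma~\ref{lem:orbit} gives
\[\chi_n(w,\lambda)=f(w,\lambda)f(-w,\lambda)\]
with $f$ irreducible and $f(w,\lambda)\neq f(-w,\lambda)$. Normalize $f$ to be monic in $\lambda$; this is possible because $\chi_n$ is monic in $\lambda$, so the leading coefficients of its factors are units. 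The substitution $w\mapsto-w$ does not change $\lambda$-degree, so both factors have $\lambda$-degree $d=\deg_\lambda f$. Then $n=2d$, so $n$ must be even and each factor has degree $n/2$. This proves both the odd case and the even case of the first two assertions.

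For the final assertion, let $g(t,\lambda)$ be an irreducible factor of $P_n$ of odd $\lambda$-degree. First I need $g$ to be monic in $\lambda$ up to a scalar. This holds because $P_n$ is monic in $\lambda$, so the product of the leading coefficients of its factors is $1$, which forces each leading coefficient to be a nonzero constant. Now apply Lemma~\ref{lem:orbit} to $g$ itself, with $X(w,\lambda)=g(w^2,\lambda)$. If $X$ were reducible, it would equal $f(w,\lambda)f(-w,\lambda)$, two factors of equal $\lambda$-degree. Then $\deg_\lambda g$ would be even, which is a contradiction.

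The only point needing care is that Lemma~\ref{lem:orbit} requires monicity in $\lambda$ and irreducibility in $\CC[t,\lambda]$. The monicity argument above handles this, and no step is a serious obstacle.
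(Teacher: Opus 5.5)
Your proposal is correct and follows the paper's proof. Both apply Lemma~\ref{lem:orbit}, first to $P_n$ and then to the odd-degree factor $g$, and observe that a conjugate-pair factorization $f(w,\lambda)f(-w,\lambda)$ splits the $\lambda$-degree evenly. Your remarks on the monicity of $P_n$ and of $g$ just make explicit a hypothesis of Lemma~\ref{lem:orbit} that the paper leaves implicit.
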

\begin{proof}
Apply Lemma~\ref{lem:orbit}.
In odd $\lambda$-degree, a conjugate-pair factorization would have to split the degree evenly, which is impossible.
\end{proof}

\section{Newton Polytope Constraints on Factors}\label{sec:newton-polytope}

In this section Newton polytopes are taken in the $(t,\lambda)$-exponent coordinates.
After a generic shift in $\lambda$, the Newton polytope is nearly triangular and its Minkowski summands strongly restrict factor degrees.

\begin{Theorem}\label{thm:newton-polytope-shape}
Suppose that no $c_i=b_i^2$ is zero.
Then we have the following.
\begin{enumerate}
    \item If $n=2m$, then for generic $s\in\CC$,
    \[ \calN(P_n(t,\lambda+s))=\conv\{(0,0),(m,0),(0,2m)\}.  \]
    \item If $n=2m+1$, then for generic $s\in\CC$,
    \[ \calN(P_n(t,\lambda+s))=\conv\{(0,0),(m,0),(m,1),(0,2m+1)\}, \]
    as long as
    \[ \sum_{r=0}^m \left(\prod_{q=1}^r c_{2q-1}\right)\left(\prod_{q=r+1}^m c_{2q}\right)\neq0. \]
\end{enumerate}

Note that $P_n(t,\lambda)$ factors if and only if $P_n(t,\lambda+s)$ factors.
\end{Theorem}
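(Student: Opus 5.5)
The plan is to combine the support bound of Lemma~\ref{lem:matching-support} with an explicit computation of the coefficients sitting at the candidate vertices. After the shift $\lambda\mapsto\lambda+s$, $P_n(t,\lambda+s)$ is again a reduced continuant with diagonal entries $a_i+s$. By Lemma~\ref{lem:matching-support} its support lies in the triangle $\{k\ge0,\ \ell\ge0,\ 2k+\ell\le n\}$. Moreover, the support lies in $k\le \lfloor n/2\rfloor=m$, because a matching on $n$ vertices has at most $m$ edges. So the Newton polytope is contained in
\[
\conv\{(0,0),(m,0),(0,2m)\}
\]
when $n=2m$, and in
\[
\conv\{(0,0),(m,0),(m,1),(0,2m+1)\}
\]
when $n=2m+1$. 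It then suffices to show that each listed vertex actually appears with nonzero coefficient, since a polytope contained in the convex hull of a set of points that all lie in it equals that hull.

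The individual vertices are handled as follows.
\begin{itemize}
\item The vertex $(0,n)$ has coefficient $1$ (the term $\lambda^n$, from the empty matching).
\item The vertex $(0,0)$ has coefficient $\prod_i(a_i+s)$, which is nonzero for generic $s$.
\item For $n=2m$, the vertex $(m,0)$ has coefficient $(-1)^m c_1c_3\cdots c_{2m-1}$. This comes from the unique perfect matching of the path, and it is nonzero on $T_c$ with no genericity needed.
\item For $n=2m+1$, the matchings of size $m$ leave exactly one uncovered vertex, which must have odd index $2r+1$. The corresponding matching uses $c_1,c_3,\ldots,c_{2r-1}$ on the left and $c_{2r+2},\ldots,c_{2m}$ on the right. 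Hence the coefficient of $t^m\lambda^1$ is $(-1)^m\sum_{r=0}^m\prod_{q=1}^r c_{2q-1}\prod_{q=r+1}^m c_{2q}$, which is exactly the quantity assumed nonzero.
\item The coefficient of $t^m\lambda^0$ is $(-1)^m\sum_r(a_{2r+1}+s)\cdot(\text{same products})$. This is a polynomial in $s$ whose leading coefficient is the same nonzero sum, so it is nonzero for generic $s$.
\end{itemize}
The genericity condition on $s$ is therefore just the avoidance of finitely many roots: those of $\prod(a_i+s)$ and, in the odd case, of the $t^m\lambda^0$ coefficient.

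The only mildly delicate step is the odd case: one has to identify the size-$m$ matchings correctly and observe that the $(m,0)$ coefficient is affine in $s$ with slope equal to the assumed nonvanishing sum. The final remark, that $P_n(t,\lambda)$ factors if and only if $P_n(t,\lambda+s)$ factors, is Lemma~\ref{lem:elementary-invariances}, since the shift is a ring automorphism.
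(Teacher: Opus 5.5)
Your proposal is correct and follows essentially the same route as the paper: you read off the nonzero coefficients at $(0,n)$ and $(m,0)$ (resp.\ $(m,1)$) from the matching/Leibniz expansion, and you use a generic shift to make the remaining vertex coefficients nonzero. You are slightly more complete than the paper's proof, since you make explicit that the support lies in the claimed polytope via Lemma~\ref{lem:matching-support}, and that the $t^m\lambda^0$ coefficient is affine in $s$ with slope equal to the assumed nonvanishing sum.
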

\begin{proof}
For $n=2m$, the point $(0,2m)$ comes from $\lambda^{2m}$, and $(m,0)$ comes from the permutation  
\[  (1,2),(3,4),\ldots,(2m-1,2m),\]
in the Leibniz expansion of the determinant whose coefficient is \[(-1)^m c_1c_3\cdots c_{2m-1}\neq0.\]
A generic shift $\lambda\mapsto\lambda+s$ makes the constant term nonzero, giving the claimed triangle.
Since a triangle is only homothetically decomposable, any Newton polytope of a factor is homothetic to this triangle, up to translation and the degenerate point case.

For $n=2m+1$, the point $(0,2m+1)$ comes from $\lambda^{2m+1}$.
The maximum matchings have size $m$ and leave one vertex unmatched.
The coefficient of $t^m\lambda$ is
\[(-1)^m \sum_{r=0}^m\left(\prod_{q=1}^r c_{2q-1}\right)\left(\prod_{q=r+1}^m c_{2q}\right).\]
By hypothesis, this is nonzero, so $(m,1)$ occurs.
A generic shift in $\lambda$ then makes the coefficient of $t^m$ nonzero, giving $(m,0)$, and also makes the constant term nonzero.
\end{proof}

\begin{Corollary}\label{cor:even-factors}
Under the assumptions of Theorem~\ref{thm:newton-polytope-shape}:
When $n$ is even, all factors of $P_n(t,\lambda)$ have even degree in $\lambda$.
If, moreover, the diagonal entries are pairwise distinct, then every quadratic factor of $P_n(t,\lambda)$ has the form
\[ (\lambda+a_i)(\lambda+a_j)-\eta t.\]
\end{Corollary}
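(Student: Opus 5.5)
The plan is to read both assertions off the triangle shape in Theorem~\ref{thm:newton-polytope-shape}(1). Let $n=2m$ and pick a generic shift $s$, so that
\[
\calN(P_n(t,\lambda+s))=\conv\{(0,0),(m,0),(0,2m)\}=:\Delta .
\]
Suppose $P_n=FG$ is a nontrivial factorization. Since $P_n$ is monic in $\lambda$, we may take $F$ and $G$ monic in $\lambda$. The shift is a ring automorphism, so $P_n(t,\lambda+s)=F(t,\lambda+s)\,G(t,\lambda+s)$, and the shifted factors keep their $\lambda$-degrees. By Ostrowski's theorem,
\[
\calN(F(t,\lambda+s))+\calN(G(t,\lambda+s))=\Delta .
\]

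The key step is the standard fact that every Minkowski summand of a triangle is a homothetic copy $\mu\Delta+v$ with $\mu\in[0,1]$. The reason is that each edge direction of a summand must be parallel to an edge of $\Delta$, and a polygon whose edges are parallel to those of a triangle is a homothet of it. Both summands contain lattice points of $\ZZ_{\ge0}^2$ and sum to $\Delta$, which has vertex $(0,0)$, so both translations must be $0$. Hence
\[
\calN(F(t,\lambda+s))=\mu\,\Delta .
\]
This polytope has a vertex at $(\mu m,0)$, so $\mu m$ is an integer. Its top vertex is $(0,\mu\cdot 2m)$, so the $\lambda$-degree of $F$ is $d=2\mu m$, which is even. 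The same argument applies to $G$.

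For the quadratic statement, let $F$ be a monic quadratic factor, so $d=2$. Lemma~\ref{lem:universal-support} gives
\[
F=\lambda^2+f_1\lambda+f_2 ,
\]
where $f_1$ is constant and $\deg_t f_2\le 1$. In particular $F=F_0(\lambda)-\eta t$ for some constant $\eta$. Setting $t=0$ shows that $F_0=F(0,\lambda)$ divides $P_n(0,\lambda)=\prod_i A_i$. Since the $a_i$ are pairwise distinct, this product is squarefree, so $F_0=A_iA_j$ for some $i\neq j$. That gives the claimed form $(\lambda+a_i)(\lambda+a_j)-\eta t$.

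The only nonformal ingredient is the summand-of-a-triangle fact together with the pinning of the translation to the origin. If a cleaner route is wanted, I would argue directly instead: since $(0,0)$ and $(0,2m)$ are vertices of $\Delta$, each lies in the sum only as a sum of the corresponding extreme points of the two factors. So $\calN(F)$ contains $(0,0)$ and $(0,d)$. The edge of $\Delta$ through those points has normal $(1,2)$, which is tight exactly on the hypotenuse $2k+\ell=2m$. Comparing the faces of the two summands in this normal direction forces the hypotenuse face of $\calN(F)$ to run from $(0,d)$ to $(d/2,0)$. That point must be a lattice point, so $d$ is even.
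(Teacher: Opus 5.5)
Your proof is correct and is essentially the paper's argument: the triangle from Theorem~\ref{thm:newton-polytope-shape}, the fact that its Minkowski summands are homothets, integrality of the horizontal length $d/2$, and, for the quadratic case, Lemma~\ref{lem:universal-support} together with the specialization $t=0$. You also justify details the paper leaves implicit, namely monicity of the factors, Ostrowski's theorem, and why the translation is zero. One correction to your optional alternative route: the face to compare is the hypotenuse $2k+\ell=2m$, whose normal is $(2,1)$ rather than $(1,2)$, and this edge does not pass through $(0,0)$; with that fixed, the face comparison does give the lattice vertex $(d/2,0)$.
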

\begin{proof}
For $n=2m$, the Newton polytope is the triangle
\[  \conv\{(0,0),(m,0),(0,2m)\}.\]
Every Minkowski summand of a triangle is homothetic to that triangle, up to translation and the degenerate point case.
If a factor has $\lambda$-degree $d$, then its Newton polytope has vertical height $d$.
The homothety ratio is therefore $d/(2m)$, so its horizontal length is $d/2$.
Since this is a lattice polytope, $d/2 \in \ZZ$, and hence $d$ is even.

Assume now that the diagonal entries are pairwise distinct.
For $d=2$, Lemma~\ref{lem:universal-support} gives
\[ F(t,\lambda)=F_0(\lambda)+\eta' t.\]
Then $F_0$ is a product $A_iA_j$.
Renaming $-\eta'$ as $\eta$ gives the displayed form.
\end{proof}

\begin{Lemma}\label{lem:odd-minkowski-summands}
Assume the hypotheses of Theorem~\ref{thm:newton-polytope-shape}.
Let $n=2m+1$, and assume
\[\sum_{r=0}^m\left(\prod_{q=1}^r c_{2q-1}\right)\left(\prod_{q=r+1}^m c_{2q}\right)\neq 0.\]
Then, after the same generic $\lambda$-shift used in Theorem~\ref{thm:newton-polytope-shape}, every nonconstant factor of $ P_n(t,\lambda)$ has a Newton polytope, up to translation, of one of the following two forms:
\[ \text{conv}\{(0,0),(r,0),(0,2r)\},\]
or
\[ \text{conv}\{(0,0),(0,2r+1),(r,1),(r,0)\}.\]
In particular, every factor has either even $\lambda$-degree $2r$ or odd $\lambda$-degree $2r+1$, and at most one factor in a complete factorization can have odd $\lambda$-degree.
\end{Lemma}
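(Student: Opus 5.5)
Let $Q$ denote the quadrilateral $\conv\{(0,0),(m,0),(m,1),(0,2m+1)\}$ from Theorem~\ref{thm:newton-polytope-shape}(2). The plan is to classify the lattice Minkowski summands of $Q$ directly, as for the triangle in Corollary~\ref{cor:even-factors}. If $P_n=F_1\cdots F_k$ after the generic shift, Ostrowski's theorem gives $\calN(P_n)=\calN(F_1)+\cdots+\calN(F_k)$. Each edge of a Minkowski sum is a sum of parallel edges (or vertices) of the summands. So every summand is a lattice polygon whose edge directions lie among the outer normals of $Q$, with edge lengths bounded by those of $Q$.

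First I will record the edges of $Q$ in counterclockwise order. These are the bottom edge from $(0,0)$ to $(m,0)$, of direction $(1,0)$ and lattice length $m$; the vertical edge from $(m,0)$ to $(m,1)$, of lattice length $1$; the slanted edge from $(m,1)$ to $(0,2m+1)$, of direction $(-1,2)$ and lattice length $m$; and the left edge from $(0,2m+1)$ down to $(0,0)$, of lattice length $2m+1$. Any summand $N$ has edges in these four directions, with lattice lengths $x\le m$, $\epsilon\in\{0,1\}$, $y\le m$ and $h$. Closure of the polygon forces
\[
x(1,0)+\epsilon(0,1)+y(-1,2)+h(0,-1)=0,
\]
so $x=y$ and $h=2x+\epsilon$. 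Taking $r=x$, the case $\epsilon=0$ gives the triangle $\conv\{(0,0),(r,0),(0,2r)\}$. The case $\epsilon=1$ gives $\conv\{(0,0),(r,0),(r,1),(0,2r+1)\}$. For $r=0$ and $\epsilon=1$ this is the vertical segment of length one, which is a genuine degree-one factor. A nonconstant factor has positive $\lambda$-degree, so it is not a point. This is the only step needing care, namely justifying that a summand polygon uses only the edge directions of $Q$ in the same cyclic order. That follows from the standard fact that the face of a Minkowski sum in any direction is the sum of the faces of the summands in that direction.

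Next, the total vertical length is additive. Moreover, the unit vertical edge of $Q$ at $t$-degree $m$ is the sum of the $\epsilon$-contributions of the summands. Hence $\sum_j\epsilon_j=1$, so exactly one factor in a complete factorization has $\epsilon=1$, that is, odd $\lambda$-degree $2r+1$. All others have even degree $2r$. The $\lambda$-degree of a factor equals the height $h$ of its Newton polytope, because the factor is monic in $\lambda$ up to a constant, since $P_n$ is monic. This yields the parity statement and the claim that at most one factor has odd degree.

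Finally, I will note that the factors themselves are unaffected by the shift. Their Newton polytopes change only by the shift $\lambda\mapsto\lambda+s$, and the statement is made after that shift, as in Theorem~\ref{thm:newton-polytope-shape}. Translations are absorbed by the phrase ``up to translation''. A translation cannot actually occur here, since each factor has a nonzero constant term after the generic shift. The main obstacle is only the clean edge-sequence argument above. The rest is bookkeeping.
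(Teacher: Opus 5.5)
Your proposal is correct and follows essentially the same route as the paper. It restricts each Minkowski summand's edge directions to the four edge directions of $Q$, uses the closure relation to get $x=y$ and $h=2x+\epsilon$, and uses additivity of the unit vertical edge to conclude $\sum_j\epsilon_j=1$. One wording slip: a summand's edge directions lie among the edge directions of $Q$ (equivalently, its outer normals lie among those of $Q$), not among the outer normals.
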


\begin{proof}
By Theorem~\ref{thm:newton-polytope-shape} and the displayed nonvanishing condition, the Newton polytope of $ P_{2m+1}$, after the chosen generic shift in $\lambda$, is
\[  Q_m  =  \text{conv}\{(0,0),(m,0),(m,1),(0,2m+1)\}.\]
Indeed, the sum shown is the coefficient of the top $t^m\lambda$-term, so the vertex $(m,1)$ is present; the generic shift ensures that the corresponding $t^m\lambda^0$-coefficient is also nonzero, giving the vertex $(m,0)$.

Let
\[ P_{2m+1}=FG\]
with $F,G$ monic in $\lambda$.
Then
\[ N( P_{2m+1})=N(F)+N(G),\]
where $N(\cdot)$ denotes the Newton polytope.
Hence $N(F)$ is a Minkowski summand of $Q_m$.

Every edge direction of a Minkowski summand of a polytope is parallel to an edge direction of the polytope.
Therefore, the edge directions of $N(F)$ are among the four primitive directions
\[ (1,0),\qquad (0,1),\qquad (-1,2),\qquad (0,-1),\]
which are the edge directions of $Q_m$.

Let the lattice lengths of the edges of $N(F)$ in these directions be
\[  r,\quad \varepsilon,\quad s,\quad d,\]
with length $0$ allowed if the corresponding edge is absent.
The sum of the edge vectors around the polytope must be zero, so
\[  r(1,0)+\varepsilon(0,1)+s(-1,2)+d(0,-1)=0.\]
Comparing coordinates gives
\[ r=s,\qquad d=2r+\varepsilon.\]

The corresponding edge of $Q_m$ in the direction $(0,1)$ has lattice length $1$.
Since edge lengths add under Minkowski sum and are nonnegative integers for lattice polytopes, we must have
\[ \varepsilon\in\{0,1\}.\]

If $\varepsilon=0$, then the right vertical edge is absent.
Up to translation, the polytope has edge lengths
\[  r,\ 0,\ r,\ 2r,\]
and hence is
\[ \text{conv}\{(0,0),(r,0),(0,2r)\}.\]

If $\varepsilon=1$, then the polytope has edge lengths
\[ r,\ 1,\ r,\ 2r+1,\]
and hence, up to translation, is
\[  \text{conv}\{(0,0),(r,0),(r,1),(0,2r+1)\}.\]

Since the $(0,1)$-edge length of $Q_m$ is $1$, the values of $\varepsilon$ over all factors in a complete factorization sum to $1$.
Thus, we see that at most one factor can have $\varepsilon=1$, i.e. at most one factor can have odd $\lambda$-degree.
\end{proof}

\begin{Corollary}\label{Cor:oddPolytopeConsequences}
Assume $n=2m+1$, no $c_i$ is zero, and
\[\sum_{r=0}^m\left(\prod_{q=1}^r c_{2q-1}\right)\left(\prod_{q=r+1}^m c_{2q}\right)\neq 0.\]
Any factorization of $P_n(t,\lambda)$ in odd length has at most one factor of odd degree in $\lambda$.

Consequently, in the irreducible factorization of $\chi_n(w,\lambda)$ in $\CC[w,\lambda]$, all odd $\lambda$-degree factors occur in $w\mapsto -w$ conjugate pairs, except for the factor descending from the unique odd-degree factor of $P_n$.

Moreover, any even-degree factor of $P_n(t,\lambda)$ has Newton polytope of the form
\[ \conv\{(0,0),(r,0),(0,2r)\}.\]
In particular, as in the even case, all quadratic factors of $P_n(t,\lambda)$ have the form $A_pA_q-\alpha t$.
\end{Corollary}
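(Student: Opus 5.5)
The first assertion follows directly from Lemma~\ref{lem:odd-minkowski-summands}. Under the stated hypotheses, after the generic shift $\lambda\mapsto\lambda+s$ (which by Theorem~\ref{thm:newton-polytope-shape} does not change reducibility), every factor of $P_n$ has Newton polytope of one of the two listed shapes. The odd-degree shape is the one with $\varepsilon=1$. Since the $\varepsilon$-values add to the length $1$ of the vertical edge of $Q_m$, at most one factor of any factorization can have odd $\lambda$-degree. This applies to arbitrary factorizations, not only complete ones: a factorization into several pieces can be refined to a complete one, and each piece's $\varepsilon$ is the sum of the $\varepsilon$'s of its irreducible constituents.

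For the statement about $\chi_n$, I would factor $P_n=\prod_j g_j$ into irreducibles in $\CC[t,\lambda]$ and then substitute $t=w^2$. By Lemma~\ref{lem:orbit}, each $g_j(w^2,\lambda)$ is either irreducible or of the form $f_j(w,\lambda)f_j(-w,\lambda)$. Every irreducible factor of $\chi_n$ arises in exactly one of these two ways.
\begin{itemize}
\item If $g_j$ has odd $\lambda$-degree, then $g_j(w^2,\lambda)$ is irreducible by Corollary~\ref{cor:chi-from-P}. By the first part, there is at most one such $g_j$.
\item If $g_j$ has even $\lambda$-degree and $g_j(w^2,\lambda)$ is irreducible, the resulting factor of $\chi_n$ has even degree and needs no discussion.
\item If $g_j(w^2,\lambda)=f_j(w,\lambda)f_j(-w,\lambda)$, then $f_j$ and $f_j(-w,\lambda)$ form a conjugate pair.
\end{itemize}
Hence every odd-degree irreducible factor of $\chi_n$ other than the one descending from the unique odd-degree $g_j$ lies in a conjugate pair. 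The pair arises from the split of an even-degree $g_j$, necessarily into two factors of equal degree.

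For the last part, let $F$ be an even-degree factor. Its polytope must be the triangle shape: the quadrilateral shape has $\lambda$-degree $2r+1$, which is odd. Since $F$ is monic, $(0,\deg F)$ lies in $N(F)$. I will also need $(0,0)$ to lie in $N(F)$ after the generic shift, so that the translation is trivial. I would argue this by noting that the constant term of the shifted $P_n$ is the product of the constant terms of the factors, and it is nonzero by Theorem~\ref{thm:newton-polytope-shape}. This also pins the lowest $\lambda$-power at $0$, so $N(F)=\conv\{(0,0),(r,0),(0,2r)\}$ with $2r=\deg F$. The shape is invariant under shifting $\lambda$ back, in the sense that the support bound $2k+\ell\le 2r$ is preserved, which is what matters.

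For a quadratic factor, $r=1$, so $F=F_0(\lambda)+\eta' t$ with $\deg F_0=2$; this is also the content of Lemma~\ref{lem:universal-support}. Setting $t=0$ shows that $F_0$ divides $\prod_i A_i$, so $F_0=A_pA_q$. This uses no distinctness assumption: $F_0$ is a monic quadratic dividing a product of linear factors, so it is a product of two of them, possibly with $p$ and $q$ having equal diagonal values. Writing $\alpha=-\eta'$ gives the form $A_pA_q-\alpha t$.

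The only delicate point is bookkeeping the translation and the shift so that the exact triangle form, rather than its form up to translation, is justified. Nonvanishing of the constant term after the shift handles this.
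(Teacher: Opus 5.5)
Your proposal is correct and follows the route the paper intends. The paper states this corollary without a separate proof, as a direct consequence of Lemma~\ref{lem:odd-minkowski-summands} (the $\varepsilon$-count gives the first and third claims), Lemma~\ref{lem:orbit} with Corollary~\ref{cor:chi-from-P} (the lift to $\chi_n$), and Lemma~\ref{lem:universal-support} with $P_n(0,\lambda)=\prod_i A_i$ (the quadratic form). Your two refinements are accurate and slightly sharpen the paper's phrasing: the exact triangle $\conv\{(0,0),(r,0),(0,2r)\}$ holds only after the generic shift (otherwise only the support bound $2k+\ell\le 2r$ survives), and the quadratic form does not need the $a_i$ to be distinct.
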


\section{On Constant Branches}\label{sec:constant-branches}

\begin{Lemma}\label{lem:conFracRep}
We may write:       \[\frac{P_{1,j-1}}{P_{1,j}}= \cfrac{1}{L_1-\cfrac{t c_{j-1}}{L_2-\cfrac{t c_{j-2}}{\ddots-\cfrac{t c_1}{L_j}}}}\]

Where $L_i=\lambda+a_{j-i+1}$.
\end{Lemma}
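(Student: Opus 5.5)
The identity is the standard continued-fraction expansion of a ratio of consecutive continuants. It follows from the three-term recurrence
\[P_k=(\lambda+a_k)P_{k-1}-t\,c_{k-1}P_{k-2},\]
which is the recurrence of Section~\ref{sec:basic} applied to the block $[1,k]$. We read the identity in the field $\CC(t,\lambda)$. Each $P_{1,k}$ is monic of $\lambda$-degree $k$, so each is a nonzero element there and every quotient below makes sense.

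We argue by induction on $j$, unrolling the recurrence from the top index downward. Write $R_k=P_{1,k-1}/P_{1,k}$, with the conventions $P_{1,0}=1$ and $P_{1,-1}=0$.

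The base case is $j=1$. Here $R_1=1/(\lambda+a_1)=1/L_1$, which matches the claimed continued fraction, since that fraction has the single level $L_1=\lambda+a_1$.

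For the inductive step, divide the recurrence for $P_{1,j}$ by $P_{1,j-1}$:
\[\frac{P_{1,j}}{P_{1,j-1}}=(\lambda+a_j)-t\,c_{j-1}\frac{P_{1,j-2}}{P_{1,j-1}}.\]
This gives
\[R_j=\cfrac{1}{(\lambda+a_j)-t\,c_{j-1}R_{j-1}}.\]
By the inductive hypothesis, $R_{j-1}$ equals the continued fraction of depth $j-1$. Its levels are $L'_i=\lambda+a_{(j-1)-i+1}=\lambda+a_{j-i}$ and its couplings run $c_{j-2},\dots,c_1$.

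Substitute this expression for $R_{j-1}$ into the formula for $R_j$. The outermost level becomes $\lambda+a_j=L_1$, and the next partial numerator is $t\,c_{j-1}$. The remaining levels $L'_i$ become $L_{i+1}=\lambda+a_{j-(i+1)+1}$, and the couplings below run $c_{j-2},\dots,c_1$. This is exactly the displayed expression for $R_j$ with $L_i=\lambda+a_{j-i+1}$.

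The only delicate point is bookkeeping. We must check that the index reversal $L_i=\lambda+a_{j-i+1}$ and the descending order of the $c$'s line up after the shift $i\mapsto i+1$. We also need each intermediate denominator to be nonzero. This holds in $\CC(t,\lambda)$ because each denominator is $P_{1,k}/P_{1,k-1}\neq0$, so no generic-parameter assumption is required.
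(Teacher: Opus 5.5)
Your proof is correct and follows the same argument as the paper, which derives the identity by repeatedly dividing $P_{1,k}=(\lambda+a_k)P_{1,k-1}-tc_{k-1}P_{1,k-2}$ by $P_{1,k-1}$ from $k=j$ downward; this is exactly your induction $R_j=1/\bigl((\lambda+a_j)-tc_{j-1}R_{j-1}\bigr)$. Your explicit checks of the index shift $L'_i=L_{i+1}$ and of the nonvanishing of each denominator in $\CC(t,\lambda)$ (each $P_{1,k}$ is monic in $\lambda$) spell out details the paper leaves implicit.
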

\begin{proof}
This is obtained by repeatedly dividing the recurrence
\[ P_{1,k}=(\lambda+a_k)P_{1,k-1}-tc_{k-1}P_{1,k-2} \]
by $P_{1,k-1}$, starting from $k=j$ and decrementing.
\end{proof}

\begin{Lemma}\label{lem:contFracEquivlance}
Let $M(t)$ and $N(t)$ be two rational functions of the form 
\[ M = \cfrac{1}{L_1-\cfrac{t u_2}{L_2-\cfrac{t u_3}{\ddots-\cfrac{t u_l}{L_l}}}}, \qquad N = \cfrac{1}{R_1-\cfrac{t v_2}{R_2-\cfrac{t v_3}{\ddots-\cfrac{t v_r}{R_r}}}},\]
where $L_i, R_i, u_i, v_i$ are all nonzero.
Suppose that $\alpha M + \beta N = 0$ as a rational function, where $\alpha, \beta \neq 0$, then $l = r$.
Moreover, for this equality to be satisfied, we must have that $\beta = - \alpha \frac{R_1}{L_1}$ and $v_{s} = u_{s}\frac{R_{s-1}R_{s}}{L_{s-1} L_{s}}$ for $s = 2,\dots, l$.
\end{Lemma}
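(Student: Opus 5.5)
We argue by induction on $\min(l,r)$, peeling off one level of each continued fraction at a time and comparing values at $t=0$. Throughout, we work over the field $K=\CC(\lambda)$ and regard $M,N$ as elements of $K(t)$. The $L_i,R_i$ (in our application linear forms $\lambda+a$) and the $u_i,v_i$ are nonzero elements of $K$ independent of $t$, and $\alpha,\beta\in K^{*}$. For $k\ge1$ write $M_k$ for the tail of $M$ beginning at level $k$:
\[ M_k=\cfrac{1}{L_k-\cfrac{tu_{k+1}}{\ddots-\cfrac{tu_l}{L_l}}}, \]
so that $M_1=M$ and $1/M_k=L_k-tu_{k+1}M_{k+1}$, with the convention $M_{l+1}=0$. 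Define $N_k$ for $N$ in the same way.

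The first step is a preliminary regularity fact, proved by downward induction on $k$. Each $M_k$ is regular at $t=0$ and satisfies $M_k(0)=1/L_k\neq0$; in particular $M_k\neq0$ in $K(t)$. Indeed, $1/M_k=L_k-tu_{k+1}M_{k+1}$ with $M_{k+1}$ regular at $0$, so $1/M_k$ is regular at $0$ with value $L_k\ne0$. The same holds for $N_k$. This is the point that makes evaluation at $t=0$ legitimate, and I expect it to be the only place needing genuine care. The rest is bookkeeping.

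Next, from $\alpha M+\beta N=0$ we get $M=cN$ with $c=-\beta/\alpha\in K^*$, hence $1/M=c^{-1}(1/N)$. Writing this out,
\[ L_1-tu_2M_2=c^{-1}R_1-c^{-1}tv_2N_2 .\]
Setting $t=0$ gives $L_1=R_1/c$, so $c=R_1/L_1$ and $\beta=-\alpha R_1/L_1$. Subtracting the constant terms and dividing by $t$ in $K(t)$ then yields
\[ u_2M_2-\frac{L_1}{R_1}v_2N_2=0 .\]
This is a relation of the same shape for the tails, with $\alpha'=u_2$ and $\beta'=-(L_1/R_1)v_2$, both nonzero.

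Now we settle the lengths. If $l=1$ but $r\ge2$, then $M_2=0$ while $v_2N_2\ne0$ by the regularity fact, which is a contradiction. Symmetrically, $r=1$ forces $l=1$. So either $l=r=1$ and we are done, or both lengths are at least $2$ and we apply the induction hypothesis to the tails, which have lengths $l-1$ and $r-1$. This gives $l=r$, and the induction hypothesis applied to the tail relation gives
\[ \beta'=-\alpha'\frac{R_2}{L_2}, \qquad\text{that is,}\qquad \frac{L_1}{R_1}v_2=u_2\frac{R_2}{L_2}, \]
so $v_2=u_2R_1R_2/(L_1L_2)$. The formulas for $v_s$ with $s\ge3$ come from the induction hypothesis applied to the re-indexed tails: there the leading entries are $L_2,R_2$ and the subsequent ones are $L_3,R_3,\ldots$, so the shifted formulas read exactly $v_s=u_sR_{s-1}R_s/(L_{s-1}L_s)$.
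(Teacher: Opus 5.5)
Your proof is correct and follows the paper's argument. Both evaluate at $t=0$ to get $\beta=-\alpha R_1/L_1$, invert, cancel constant terms and divide by $t$ to get a relation of the same shape for the tails, and then iterate; your induction on $\min(l,r)$ simply formalizes the paper's ``repeat the same process.'' The differences are minor: you make explicit the regularity $M_k(0)=1/L_k\neq 0$ that justifies evaluating at $t=0$, and you rule out $l\neq r$ one level deeper ($M_{l+1}=0$ versus $v_{l+1}N_{l+1}\neq 0$) rather than by the paper's ``constant versus nonconstant'' comparison of $M_l$ and $N_l$, which rests on the same fact.
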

\begin{proof}
Define \[M_i(t) = \cfrac{1}{L_i-\cfrac{t u_{i+1}}{L_{i+1}-\cfrac{t u_{i+2}}{\ddots-\cfrac{t u_l}{L_l}}}}, \qquad N_j(t) = \cfrac{1}{R_j-\cfrac{t v_{j+1}}{R_{j+1}-\cfrac{t v_{j+2}}{\ddots-\cfrac{t v_r}{R_r}}}}.\]
    
First, notice that letting $t = 0$ gives $\beta = - \alpha \frac{R_1}{L_1}$.

Substituting this back in gives us $\alpha M-\alpha\frac{R_1}{L_1}N=0$, and so we have that $M=\frac{R_1}{L_1}N$. 
Inverting both sides gives us $\frac{1}{M}=\frac{L_1}{R_1}\frac{1}{N}$, where $\frac{1}{M}=L_1-tu_2M_2(t)$ and $\frac{1}{N}=R_1-tv_2N_2(t)$.

Thus $t(u_2M_2-\frac{L_1}{R_1}v_2N_2)=0$, so factoring out $t$ forces $u_2M_2-\frac{L_1}{R_1}v_2N_2=0$.
Note that this is the same type of relation that we started with.
Setting $t=0$ again gives us that $v_2=u_2\frac{R_1R_2}{L_1L_2}$,

Plugging this in for $v_2$ and repeating the same process, we obtain $u_3M_3 - \frac{L_2}{R_2}v_3N_3=0$, or more generally, we see that $u_s M_s - \frac{L_{s-1}}{R_{s-1}}v_sN_s = 0$, and so $v_s = u_{s}\frac{R_{s-1}R_{s}}{L_{s-1} L_{s}}$.

After interchanging $M$ and $N$, it is enough to rule out $l<r$.
In that case $M_l=\frac{1}{L_l}$ and $N_l=\frac{1}{R_l-tv_{l+1}N_{l+1}}$ are linearly dependent.
However, this would mean there is a nonconstant function that is given by a constant function.
\end{proof}

\begin{Theorem}\label{thm:constBranchClass}
Suppose that the $a_i$ are pairwise distinct and $c_i\neq0$.
Let $n=2m+1 \geq 3$.
If $P_n(t,\lambda)$ has a factor of the form $\lambda+\alpha$, then $\alpha=a_{m+1}$.

Moreover, $\lambda+a_{m+1}$ divides $P_n(t,\lambda)$ if and only if the $c_i$ satisfy the following relations:
\[ c_{m+1}=-c_m\frac{a_{m+2}-a_{m+1}}{a_m-a_{m+1}}, \]
and
\[c_{m+s}=c_{m+1-s} \frac{(a_{m+s}-a_{m+1})(a_{m+s+1}-a_{m+1})}{(a_{m+2-s}-a_{m+1})(a_{m+1-s}-a_{m+1})}, \qquad s=2,\ldots,m.  \]
\end{Theorem}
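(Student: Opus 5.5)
By Lemma~\ref{lem:universal-support}, a monic linear factor is independent of $t$. Setting $t=0$ gives $P_n(0,\lambda)=\prod_i A_i$, so $\alpha=a_k$ for some $k$. The condition $\lambda+a_k\mid P_n$ then says that $P_n(t,-a_k)\equiv 0$ as a polynomial in $t$. I would therefore specialize $\lambda=-a_k$ and expand along row $k$ using the cofactor (three-term) identity
\[
P_{1,n}=A_kP_{1,k-1}P_{k+1,n}-tc_{k-1}P_{1,k-2}P_{k+1,n}-tc_kP_{1,k-1}P_{k+2,n}.
\]
At $\lambda=-a_k$ the first term vanishes. Dividing by $P_{1,k-1}P_{k+1,n}$, which is nonzero since its $t=0$ value is $\prod_{i\ne k}(a_i-a_k)\neq0$, the identity becomes
\[
c_{k-1}\frac{P_{1,k-2}}{P_{1,k-1}}+c_k\frac{P_{k+2,n}}{P_{k+1,n}}=0 .
\]
If $k=1$ or $k=n$, only one term survives. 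That term is $tc\cdot(\text{nonzero})$, so the identity is impossible. Hence $2\le k\le n-1$, and both ratios are present with nonzero coefficients.

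Next I would put each ratio in continued fraction form. Lemma~\ref{lem:conFracRep} applies directly to the left block $[1,k-1]$, giving a fraction of length $k-1$ with partial denominators $L_i=a_{k-i}-a_k$ and numerators $tc_{k-i}$. The right block $[k+1,n]$ is handled by the mirrored version of the lemma: reverse the chain, or equivalently run the same recurrence from the other end. This gives a fraction of length $n-k$ with $R_i=a_{k+i}-a_k$ and numerators $tc_{k+i-1}$. All $L_i$, $R_i$ are nonzero by pairwise distinctness, and all couplings are nonzero. The $\alpha M+\beta N=0$ relation derived above matches Lemma~\ref{lem:contFracEquivlance} with $\alpha=c_{k-1}$ and $\beta=c_k$. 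The lemma forces $k-1=n-k$, i.e.\ $k=m+1$, which proves the first claim.

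For the explicit relations I would read off the conclusions of Lemma~\ref{lem:contFracEquivlance} with $k=m+1$. The relation $\beta=-\alpha R_1/L_1$ becomes
\[
c_{m+1}=-c_m\frac{a_{m+2}-a_{m+1}}{a_m-a_{m+1}}.
\]
The relations $v_s=u_sR_{s-1}R_s/(L_{s-1}L_s)$ have $u_s=c_{m+1-s}$ and $v_s=c_{m+s}$, $L_j=a_{m+1-j}-a_{m+1}$ and $R_j=a_{m+1+j}-a_{m+1}$. These give exactly the displayed formula for $s=2,\dots,m$. The main bookkeeping obstacle is matching the indices of the numerators $tc$ in the two fractions, especially confirming that the $s$-th numerator on the right is $c_{m+s}$ and not $c_{m+s-1}$. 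I would check this first on $n=5$.

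For the converse, the steps of Lemma~\ref{lem:contFracEquivlance} must be reversible. I would prove by downward induction on $s$ that, under the stated relations, $M_s=\frac{R_s}{L_s}N_s$. At $s=m$ this holds because both sides are $1/L_m$ and $1/R_m$ up to the stated scaling. Inverting then gives $1/M_{s-1}=L_{s-1}-tu_sM_s$, and substituting the relation for $v_s$ shows $1/M_{s-1}=\frac{L_{s-1}}{R_{s-1}}\cdot 1/N_{s-1}$. This recovers $c_mM+c_{m+1}N=0$. Multiplying back by $P_{1,m}P_{m+2,n}$ and using the row expansion then shows $P_n(t,-a_{m+1})=0$, so $\lambda+a_{m+1}$ divides $P_n$.
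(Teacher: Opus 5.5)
Your proposal is correct and follows essentially the same route as the paper: specialize at $\lambda=-a_k$ and expand along row $k$, rule out the endpoints $k=1,n$, write both ratios as continued fractions, and use Lemma~\ref{lem:contFracEquivlance} to force $k=m+1$ and read off the relations. Your preliminary reduction $\alpha=a_k$ via Lemma~\ref{lem:universal-support} and your downward induction $M_s=\frac{R_s}{L_s}N_s$ for the converse fill in two steps the paper leaves implicit; the lemma as stated only gives necessity.
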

\begin{proof}
Suppose that $\lambda+a_i$ is a factor of $P_n(t,\lambda)$.
Expanding along the $i$-th row/column gives
\[ P_n(t,-a_i)= -t c_{i-1} P_{1,i-2}(t,-a_i) P_{i+1,n}(t,-a_i) - t c_i P_{1,i-1}(t,-a_i) P_{i+2,n}(t,-a_i). \]
Thus, if $\lambda+a_i$ is a constant branch, then
\[ c_{i-1} P_{1,i-2}(t,-a_i) P_{i+1,n}(t,-a_i)  + c_i P_{1,i-1}(t,-a_i) P_{i+2,n}(t,-a_i)=0. \]
    
If $i=1$ or $n$, this is immediately impossible: sending $t$ to $0$ would force a repeated diagonal entry.

Let
\[M=\frac{P_{1,i-2}}{P_{1,i-1}},\qquad N=\frac{P_{i+2,n}}{P_{i+1,n}},\]
evaluated at $\lambda=-a_i$.
The constant branch condition becomes
\[c_{i-1}M+c_iN=0 \]

The two rational functions $M(t)$ and $N(t)$ are continued fractions of depths $i-1$ and $n-i$.
By Lemma~\ref{lem:contFracEquivlance}, we must have that these depths agree.
Thus $i-1=n-i$, and since $n=2m+1$, this forces $i=m+1$.
Applying Lemma~\ref{lem:contFracEquivlance} gives the displayed relations among the $c_i$.

Conversely, when $i=m+1$, Lemma~\ref{lem:contFracEquivlance} shows that the displayed relations are exactly the conditions under which $c_mM+c_{m+1}N=0$.
Hence, they are also sufficient for $\lambda+a_{m+1}$ to divide $P_n(t,\lambda)$.
\end{proof}
It is clear from the proof that we may make the following relaxation of the above theorem.
\begin{Corollary}\label{cor:middle-branch-unique}
Suppose that $n = 2m+1$. If $\lambda+\alpha$ is a linear factor of $P_n(t,\lambda)$ and $a_i=\alpha$ for a unique index $i\in[n]$, then $i=m+1$.
\end{Corollary}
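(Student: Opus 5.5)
We repeat the argument of Theorem~\ref{thm:constBranchClass}, keeping track of which hypotheses are actually used. Suppose $\lambda+\alpha$ divides $P_n(t,\lambda)$, and let $i$ be the unique index with $a_i=\alpha$. We keep the standing connectedness assumption $c_1\cdots c_{n-1}\neq0$.

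Expanding the determinant along the $i$-th row and column and evaluating at $\lambda=-a_i$ kills the diagonal term. This gives
\[ c_{i-1}P_{1,i-2}(t,-a_i)P_{i+1,n}(t,-a_i)+c_iP_{1,i-1}(t,-a_i)P_{i+2,n}(t,-a_i)=0. \]
For $i=1$, the first term is absent. Setting $t=0$ then requires $\prod_{j\ge3}(a_j-a_1)$ to vanish after dividing by $t$. More precisely, dividing by $t$ and then setting $t=0$ gives
\[ c_1\prod_{j\ge3}(a_j-a_i)=0, \]
which contradicts uniqueness of $i$. The case $i=n$ is symmetric.

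Now assume $1<i<n$. Set $t=0$ after dividing by $t$. Each $P_{r,s}(0,-a_i)=\prod_{j\in[r,s]}(a_j-a_i)$ is nonzero whenever $i\notin[r,s]$, and this uses only that the value $\alpha$ occurs once. Hence the denominators $P_{1,i-1}(t,-a_i)$ and $P_{i+1,n}(t,-a_i)$ are nonzero rational functions. So we may divide and write $c_{i-1}M+c_iN=0$, with
\[ M=\frac{P_{1,i-2}}{P_{1,i-1}},\qquad N=\frac{P_{i+2,n}}{P_{i+1,n}}. \]

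By Lemma~\ref{lem:conFracRep} and its mirror image, $M$ and $N$ are continued fractions of depths $i-1$ and $n-i$. Their partial denominators are $L_k=a_j-a_i$ for $j<i$, and $R_k=a_j-a_i$ for $j>i$. All of these are nonzero by uniqueness. The numerators are the $c$'s, which are nonzero by connectedness.

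This is exactly the setting of Lemma~\ref{lem:contFracEquivlance}, whose hypotheses require only nonzero $L,R,u,v$, with no distinctness among the $a_j$ for $j\neq i$. The lemma forces $i-1=n-i$, and so $i=m+1$.

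The one point to check is that Lemma~\ref{lem:contFracEquivlance} only uses nonvanishing of the partial denominators and numerators. This holds by inspection of its proof, so no genuine obstacle arises. Repeated values among the other $a_j$ are harmless because they never appear evaluated at $-a_i$ as zero factors.
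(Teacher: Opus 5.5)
Your proof is correct and takes the same approach as the paper, which presents the corollary as a relaxation read off from the proof of Theorem~\ref{thm:constBranchClass}; you rerun that argument (row expansion at $i$, the endpoint cases via $t\to0$, and the depth comparison of Lemma~\ref{lem:contFracEquivlance}) and correctly check that only $a_j\neq a_i$ for $j\neq i$ and connectedness are used. The one case you leave out is $n=1$, where your endpoint argument does not apply, but there the conclusion $i=1=m+1$ holds trivially.
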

Moreover, this gives us the following regarding the dimension of constant branches in the locus where no $c_i=0$ and the $a_i$ are pairwise distinct.

\begin{Corollary}\label{cor:constant-branch-codim-distinct}
When $n=2m+1$, for a fixed pairwise distinct diagonal $a$, the locus of couplings admitting constant branches has codimension $m$.
\end{Corollary}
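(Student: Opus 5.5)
By Theorem~\ref{thm:constBranchClass}, for fixed pairwise distinct $a$ with $n=2m+1$ the only possible constant branch is $\lambda+a_{m+1}$. It occurs exactly when $c=(c_1,\ldots,c_{2m})\in T_c$ satisfies the $m$ displayed relations. The plan is therefore to show that these relations cut out a subvariety of $T_c$ of dimension exactly $m$, i.e.\ of codimension $m$ in the $2m$-dimensional torus.

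To do this, I would write the relations as
\[ c_{m+s}=\kappa_s(a)\,c_{m+1-s},\qquad s=1,\ldots,m, \]
where
\[ \kappa_1=-\frac{a_{m+2}-a_{m+1}}{a_m-a_{m+1}}, \qquad \kappa_s=\frac{(a_{m+s}-a_{m+1})(a_{m+s+1}-a_{m+1})}{(a_{m+2-s}-a_{m+1})(a_{m+1-s}-a_{m+1})}\quad (s\ge 2). \]
Pairwise distinctness of the $a_i$ ensures that every numerator and denominator factor is nonzero, so each $\kappa_s$ is a well-defined nonzero constant. As $s$ ranges over $1,\ldots,m$, the index $m+s$ ranges over $m+1,\ldots,2m$ and the index $m+1-s$ ranges over $m,\ldots,1$. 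Thus the relations express the second half $(c_{m+1},\ldots,c_{2m})$ as an explicit function of the first half $(c_1,\ldots,c_m)$, with each coordinate determined by a distinct free coordinate.

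It follows that the locus is the graph of the map
\[ (\CC^*)^m\to(\CC^*)^m,\qquad (c_1,\ldots,c_m)\mapsto(\kappa_1c_m,\kappa_2c_{m-1},\ldots,\kappa_mc_1). \]
Since each $\kappa_s\neq0$, this map takes values in the torus, so the graph lies in $T_c$. It is isomorphic to $(\CC^*)^m$: it is an irreducible subtorus coset, nonempty, of dimension $m$. Hence its codimension in $T_c\cong(\CC^*)^{2m}$ is $2m-m=m$.

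The one point needing care is the claim that the displayed relations are both necessary and sufficient. Necessity is what makes the locus no larger than this graph, and sufficiency is what makes it nonempty and all of the graph. Both directions are exactly the "if and only if" in Theorem~\ref{thm:constBranchClass}, which we may invoke. I would also remark briefly that the case $m\ge1$, i.e.\ $n\ge3$, is the one covered by that theorem; for $n=1$ the statement is vacuous.
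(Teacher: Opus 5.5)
Your proposal is correct and is essentially the paper's argument. The paper's proof simply observes that, by Theorem~\ref{thm:constBranchClass}, the couplings $c_{m+1},\ldots,c_{2m}$ are uniquely determined by $c_1,\ldots,c_m$; your description of the locus as the graph of $(c_1,\ldots,c_m)\mapsto(\kappa_1c_m,\ldots,\kappa_mc_1)$, together with the check that every $\kappa_s\neq 0$ so that this graph is a nonempty $m$-dimensional subset of $T_c$, is a more careful version of that one-line observation.
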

\begin{proof}
In this space the couplings $c_{m+1},\ldots,c_{2m}$ are uniquely determined by $c_1,\ldots,c_m$.
\end{proof}

When multiple values of $a_i$ are allowed to take the same value, constant branches are much easier to come by.
Although we will not attempt to give a complete classification of all such mechanisms, we illustrate one additional mechanism.
\begin{Theorem}\label{Thm:constantsSpec}
Let $n=2m+1$.
Suppose that $a_1=a_3=\cdots=a_{n-2}=a_n$.
Then $\lambda+a_n\mid P_n(t,\lambda)$.
\end{Theorem}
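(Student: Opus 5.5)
Set $\alpha=a_n$, so that after substituting $\lambda=-\alpha$ every odd-indexed factor $A_{2j+1}=\lambda+a_{2j+1}$ vanishes. The goal is to prove that $P_{1,k}(t,-\alpha)=0$ for every odd $k$, by induction on $k$ along the recurrence
\[ P_{1,k}=A_kP_{1,k-1}-tc_{k-1}P_{1,k-2}. \]
Taking $k=n$ then gives $P_n(t,-a_n)=0$ identically in $t$, so $\lambda+a_n$ divides $P_n(t,\lambda)$ in $\CC[t,\lambda]$, because $P_n$ is monic in $\lambda$. Equivalently, we may work with the matching expansion of Lemma~\ref{lem:matching-support}, but the recurrence is cleaner.

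For the base cases, take $k=1$: here $P_{1,1}(t,-\alpha)=a_1-\alpha=0$. We also use $P_{1,-1}:=0$ and $P_{1,0}=1$, consistent with the recurrence. For the inductive step, let $k\ge 3$ be odd and assume $P_{1,k-2}(t,-\alpha)=0$. Since $k$ is odd we have $a_k=\alpha$, so $A_k$ vanishes at $\lambda=-\alpha$. Both terms of the recurrence then vanish: the first through the factor $A_k$, and the second through the factor $P_{1,k-2}$. Hence $P_{1,k}(t,-\alpha)=0$. Notice that the even-indexed entries $a_{2j}$ and the couplings $c_i$ play no role in this step.

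A cleaner way to present the same argument is combinatorial. In the matching expansion of Lemma~\ref{lem:matching-support}, any matching of the path $[1,n]$ with $n$ odd leaves at least one vertex unmatched. Every matching edge pairs an odd vertex with an even vertex, and there are $m+1$ odd vertices but only $m$ even ones, so some odd vertex $j$ is left unmatched. The corresponding term then contains the factor $A_j$, which vanishes at $\lambda=-\alpha$. This gives the same conclusion in one line, and I would include it as the alternative proof.

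I do not expect a real obstacle. The only point needing care is the final divisibility claim: $P_n(t,-\alpha)\equiv 0$ as a polynomial in $t$ must imply that $\lambda+\alpha$ divides $P_n$. This follows from division by the monic polynomial $\lambda+\alpha$ in $(\CC[t])[\lambda]$, whose remainder is exactly $P_n(t,-\alpha)$.
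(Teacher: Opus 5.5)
Your proposal is correct and takes essentially the paper's approach: the paper says only that the result ``follows immediately from the Leibniz expansion,'' and your matching argument (each edge pairs an odd and an even vertex, so some odd vertex $j$ is unmatched and contributes $A_j$, which vanishes at $\lambda=-a_n$) is exactly that argument written out. Your induction on odd prefixes via the recurrence is an equivalent repackaging of the same fact, and your justification of $\lambda+a_n\mid P_n$ by division by the monic $\lambda+a_n$ in $(\CC[t])[\lambda]$ is sound.
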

\begin{proof}
Follows immediately from the Leibniz expansion.
\end{proof}

We now show that for every fixed connected coupling, the codimension of parameters that admit constant branches in the diagonal variables is $\lfloor n/2\rfloor$. 

\begin{Theorem}\label{thm:constant-branches-fixed-couplings}
Let $n=2m+1$, and fix a choice of $c$ such that $c_i\neq0$ for all $i$.
The codimension of the space of potentials that admit constant branches is exactly $m$.
\end{Theorem}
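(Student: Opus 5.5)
We fix connected couplings $c$ and study the locus
\[Z_c=\{a\in\CC^n:\ \exists\,\alpha\in\CC,\ (\lambda+\alpha)\mid P_n(t,\lambda)\}.\]
The plan is to show that $Z_c$ is a finite union of constructible pieces, each of dimension at most $m+1$, and that at least one piece has dimension exactly $m+1$. This gives codimension exactly $m$ in $\CC^n$, since $n-(m+1)=m$.

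\textbf{Lower bound.} Theorem~\ref{Thm:constantsSpec} gives a linear subspace $\{a_1=a_3=\cdots=a_n\}$ contained in $Z_c$. It has $m+1$ odd indices forced equal, so its dimension is $1+m$ (one common odd value, plus the $m$ free even entries). This holds for every $c$, which gives $\operatorname{codim} Z_c\le m$.

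\textbf{Upper bound: reformulation.} We work on the incidence variety $\{(a,\alpha): (\lambda+\alpha)\mid P_n\}$ and use the shift invariance of Lemma~\ref{lem:elementary-invariances} to normalize $\alpha=0$. This trades one dimension of $a$ for $\alpha$, so it suffices to show that
\[Y_c=\{a:\ P_n(t,0)\equiv 0 \text{ in } t\}\]
has dimension at most $m$. By the matching formula of Lemma~\ref{lem:matching-support}, $P_n(t,0)=\sum_{k=0}^m (-t)^k e_k(a)$, where
\[e_k(a)=\sum_{|M|=k}\prod_{e\in M} c_e\prod_{j\notin V(M)} a_j .\]
Thus $Y_c$ is cut out by the $m+1$ equations $e_0=\cdots=e_m=0$ in $\CC^n$, with $n=2m+1$. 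The task is to prove these form a complete intersection, i.e.\ $\dim Y_c\le n-(m+1)=m$.

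\textbf{Upper bound: stratification.} The approach is to stratify by which $a_j$ vanish, say $Z=\{j: a_j=0\}$. The equation $e_0=\prod_j a_j=0$ forces $Z\neq\emptyset$. On a stratum with $Z$ fixed and $a_j\neq 0$ for $j\notin Z$, we need at least $|Z|+m$ independent conditions in total. The $|Z|$ coordinate conditions are already present, so the remaining $e_k$ must impose $m+1-|Z|$ further independent conditions on the nonzero coordinates.

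\textbf{Upper bound: block decomposition.} Write $Z=\{z_1<\cdots<z_q\}$. The complement splits into gaps (blocks) of lengths $\ell_0,\dots,\ell_q$. Each zero $z$ can be matched to a left or right neighbour, and the continued fraction / row expansion from the proof of Theorem~\ref{thm:constBranchClass} applies. Expanding along the zero rows expresses $P_n(t,0)$ through the block continuants $P_{r,s}(t,0)$. We then use the generic irreducibility and distinctness of continuants over the nonzero block coordinates. The key counting fact to aim for is:
\begin{itemize}
\item each block of odd length contributes a forced vanishing, either of a middle-type relation as in Lemma~\ref{lem:contFracEquivlance} or via further zeros;
\item the conditions along each block behave like the single-zero case.
\end{itemize}
In the single-zero case, Corollary~\ref{cor:middle-branch-unique} and the proof of Theorem~\ref{thm:constBranchClass} give exactly $m$ relations, now read as relations on $a$ with $c$ fixed. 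The plan is to apply induction on $n$ to the subchains between consecutive zeros, using that blocks of length $\ell$ carry at most $\lfloor \ell/2\rfloor$ free parameters beyond their $\lceil\ell/2\rceil$ forced conditions.

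\textbf{Main obstacle.} The hardest step is this upper bound: showing that for fixed $c$, the $m+1$ coefficient equations $e_k$ remain independent on every stratum. The difficulty is sharpest when many $a_j$ coincide, since Lemma~\ref{lem:contFracEquivlance} then fails to apply directly (some $L_i$ may vanish). The first thing to try is the inductive row expansion at the zero entries sketched above, checking with the Jacobian of $(e_1,\dots,e_m)$ restricted to the stratum. As a fallback, one can argue by degeneration: specialize to $t\to\infty$ leading terms, where $e_m$ is a sum over maximal matchings, and show the leading forms already define a locus of the right dimension.
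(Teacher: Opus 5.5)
Your lower bound is correct, and it is more direct than the paper's. The subspace $\{a_1=a_3=\cdots=a_n\}$ from Theorem~\ref{Thm:constantsSpec} has dimension $m+1$ for every $c$, whereas the paper gets exactness only through its inductive dimension count plus finiteness of the fibers of $(\alpha,z)\mapsto z+\alpha(1,\ldots,1)$. The normalization $\alpha=0$ and the reduction to $\dim Y_c\le m$ are also fine.

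The gap is the upper bound, which is the actual content of the theorem. What you give there is a list of intentions (\emph{the key counting fact to aim for}, \emph{the first thing to try}, a \emph{fallback}), not an argument. The block picture is also not right as stated. A zero $a_i=0$ at an interior vertex does not split the chain into independent blocks. Expanding along row $i$ gives $P_n(t,0)=-t\bigl(c_{i-1}P_{1,i-2}P_{i+1,n}+c_iP_{1,i-1}P_{i+2,n}\bigr)$ (all evaluated at $\lambda=0$). This is a sum, not a product of block continuants, so the conditions do not decompose block by block, and your per-block count has no justification. Nor do you explain how generic irreducibility of block continuants would bound $\dim Y_c$. That bound must hold at every point of $Y_c$, including points with many coincident zeros, where (as you note) Lemma~\ref{lem:contFracEquivlance} no longer applies.

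The missing idea is to push that row expansion one step further. Substitute $P_{i+1,n}=a_{i+1}P_{i+2,n}-tc_{i+1}P_{i+3,n}$ and $P_{1,i-1}=a_{i-1}P_{1,i-2}-tc_{i-2}P_{1,i-3}$; terms with $c_{i-2}$ or $c_{i+1}$ are absent when $i=2$ or $i=n-1$. This gives
\[P_n(t,0)=-t\Bigl((c_ia_{i-1}+c_{i-1}a_{i+1})P_{1,i-2}P_{i+2,n}-tc_{i-2}c_i\,P_{1,i-3}P_{i+2,n}-tc_{i-1}c_{i+1}\,P_{1,i-2}P_{i+3,n}\Bigr).\]
This is $-t$ times the $\lambda=0$ continuant of a path of length $n-2$. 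In that path, vertices $i-1,i,i+1$ are merged into one vertex with entry $c_ia_{i-1}+c_{i-1}a_{i+1}$, joined to its neighbors by the nonzero couplings $c_{i-2}c_i$ and $c_{i-1}c_{i+1}$. At the endpoints one simply gets $-tc_1P_{3,n}(t,0)$ or $-tc_{n-1}P_{1,n-2}(t,0)$.

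Hence $Y_c\cap\{a_i=0\}$ is the preimage of the corresponding length-$(n-2)$ locus, with connected coupling vector $c'$, under a linear surjection $\{a_i=0\}\to\CC^{n-2}$ with one-dimensional fibers. Induction on $m$ then gives dimension $(m-1)+1=m$ for each piece. Since $e_0=\prod_j a_j$ forces some $a_i=0$, $Y_c$ is the finite union of these pieces.

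This is exactly the paper's proof. It uses only one zero at a time. So it needs neither your stratification by exact zero sets, nor independence of the $e_k$ on each stratum, nor nonvanishing of the remaining $a_j$; further coincidences are absorbed by the recursion.
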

\begin{proof}
Suppose that $\lambda+a_j$ is a constant branch.
Then
\[  P_n(t,-a_j)\equiv0, \]
which gives $m$ coefficient equations in the variable $t$.
To prove that the codimension is exactly $m$, we compute the dimension of the incidence where a constant branch is allowed to occur at an unspecified value.

Let $\lambda=-\alpha$, $z_i=a_i-\alpha$, and
\[ K_n(t,z)=P_n(t,-\alpha). \]
Then $K_0=1$, $K_1=z_1$, and
\[  K_k=z_kK_{k-1}-tc_{k-1}K_{k-2}.\]
We write $K_k(t,z,c)$ to allow us to modify the off-diagonal terms.

Define
\[X_n(c)=\{z\in\CC^n:K_n(t,z)\equiv0\}.\]
We first show that
\[  \dim X_{2m+1}(c)=m.\]

As the constant term of $K_n$ as a polynomial in $t$ is $\prod z_i$, necessarily some $z_i$ must vanish.
We analyze each piece $X_n(c)\cap\{z_i=0\}$.

If $i=1$ or $i=n$, then
\[  K_n(t,0,z_2,\ldots,z_n)=-tc_1K_{n-2}(t,z_3,\ldots,z_n)\]
or
\[ K_n(t,z_1,\ldots,z_{n-1},0)=-tc_{n-1}K_{n-2}(t,z_1,\ldots,z_{n-2}), \]
respectively.
Thus, the endpoint piece is a product of a free coordinate and the corresponding $X_{n-2}(c')$.

If $2\le i\le n-1$, then setting $z_i=0$ contracts the two neighboring vertices into the linear combination
\[  c_i z_{i-1}+c_{i-1}z_{i+1}.\]
More precisely,
\begin{equation*}
\begin{aligned}
&K_n(t,z_1,\ldots,z_{i-1},0,z_{i+1},\ldots,z_n)\\
&\qquad =-t\,K_{n-2}\bigl(t,z_1,\ldots,z_{i-2},
c_i z_{i-1}+c_{i-1}z_{i+1},z_{i+2},\ldots,z_n;c'\bigr),
\end{aligned}
\end{equation*}
for the induced contracted coupling vector $c'$.

Note that if $i=2$, then the left coupling in the contracted chain does not appear, and if $i=n-1$, then the right coupling does not appear.

This identity follows by a straightforward computation that we include for the reader's convenience:
Recall that
\[K_{1,n}|_{z_i=0} = -t(c_{i-1}K_{1,i-2}K_{i+1,n}+ c_iK_{1,i-1}K_{i+2,n}).\]

Expand
\[ K_{i+1,n}=z_{i+1}K_{i+2,n}-tc_{i+1}K_{i+3,n},\qquad  K_{1,i-1}=z_{i-1}K_{1,i-2}-tc_{i-2}K_{1,i-3}. \]

Substituting gives the contracted continuant displayed above, which is exactly the determinant of the Jacobi matrix described.
Thus, each entry of $c'$ is either an original coupling or a product of two original couplings. In particular, $c'$ is connected.

Equivalently, on the hyperplane $\{z_i=0\}$, there is a linear map
\[  \pi_i:\{z_i=0\}\longrightarrow \CC^{n-2} \]
whose coordinates are those of the contracted chain.
This map is surjective and has one-dimensional fibers: for $2\le i\le n-1$, the only nontrivial coordinate is $c_i z_{i-1}+c_{i-1}z_{i+1}$, and both $c_{i-1}$ and $c_i$ are nonzero.

Now we can prove by induction that $X_{2m+1}(c)$ has dimension $m$.
The base case $n=1$ is immediate.

Now assume that $n=2m+1$.
Since the constant term of $K_n$ is $\prod z_i$, the condition $K_n(t,z)\equiv0$ implies that some $z_i=0$.
By the work above, for each such $i$ there is a contracted coupling vector $c'$ and a surjective contraction map $\pi_i$, with one-dimensional fibers, such that
\[  X_n(c)\cap\{z_i=0\}=\pi_i^{-1}(X_{n-2}(c')). \]
By induction, $\dim X_{n-2}(c')=m-1$.
Hence, each piece has dimension $m$, and so $\dim X_n(c)=m$.
Thus, $X_n(c)$ has codimension $m+1$ in $\CC^n$.

Finally, consider
\[  Y=\{(\alpha,z):z\in X_{2m+1}(c)\}\subset \CC\times\CC^{2m+1}.\]
It has dimension $m+1$.
The map
\[  Y\longrightarrow \CC^{2m+1},\qquad  (\alpha,z)\longmapsto a=z+\alpha(1,\ldots,1) \]
has image exactly the potential vectors admitting a constant branch.
Its fibers are finite, because $z\in X_{2m+1}(c)$ forces some $z_i=0$, so for a fixed image $a$ the possible values of $\alpha$ belong to the finite set $\{a_1,\ldots,a_{2m+1}\}$.
Therefore, the image has dimension $m+1$, and its codimension in $\CC^{2m+1}$ is $m$.
\end{proof}

\section{Strata of Pairwise Distinct Diagonal Entries}\label{sec:pairwise-distinct-strata}
Fix pairwise-distinct diagonal entries $a_1,\ldots,a_n$.
We interpret $[n]$ as the vertex set of the path graph with edge $i$ given by $(i,i+1)$.
For a subset $S\subset [n]$, define its boundary edge set by
\[ \partial S := \{\, i\in\{1,\ldots,n-1\} : |\{i,i+1\}\cap S|=1 \,\}.\]
Equivalently, $i\in\partial S$ if and only if exactly one endpoint of the edge $(i,i+1)$ lies in $S$.
For a nonempty proper subset $S\subset[n]$, define
\[ \ell_S(c) := \sum_{k=1}^{n-1} \frac{\mathbf 1_{k+1\in S}-\mathbf 1_{k\in S}}{a_{k+1}-a_k}\,c_k. \]

\begin{Theorem}\label{thm:stratcontainment}
Assume $a_i\neq a_j$ for any $i\neq j$.
If $P_n(t,\lambda)$ has a factor whose Hensel subset at $t=0$ is $S$, then $\ell_S(c)=0$.
\end{Theorem}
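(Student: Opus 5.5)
Our approach is a first-order perturbation argument in $t$. Suppose $P_n=FG$ with $F,G$ monic in $\lambda$ and $F(0,\lambda)=\prod_{i\in S}A_i$, $G(0,\lambda)=\prod_{i\notin S}A_i$. By Lemma~\ref{lem:universal-support}, we may write
\[
F=F_0+tF_1+O(t^2),\qquad G=G_0+tG_1+O(t^2),
\]
with $\deg_\lambda F_1\le |S|-2$ and $\deg_\lambda G_1\le n-|S|-2$. The key is to compare coefficients of $t$ on both sides:
\[
F_0G_1+F_1G_0=[t]P_n=-\sum_{k=1}^{n-1}c_k\prod_{j\ne k,k+1}A_j.
\]
The last equality uses the matching expansion of Lemma~\ref{lem:matching-support}, since the matchings of size one are the single edges.

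Next we divide the identity by $P_n(0,\lambda)=F_0G_0=\prod_j A_j$. This gives
\[
\frac{G_1}{G_0}+\frac{F_1}{F_0}=-\sum_k\frac{c_k}{A_kA_{k+1}}.
\]
Because $a_k\neq a_{k+1}$, the right side decomposes into partial fractions as
\[
-\sum_k\frac{c_k}{a_{k+1}-a_k}\left(\frac1{A_k}-\frac1{A_{k+1}}\right).
\]
On the left, $F_1/F_0$ is a proper fraction with simple poles only at $\lambda=-a_i$ for $i\in S$, and its degree gap is at least $2$. Likewise, $G_1/G_0$ has poles only at $\lambda=-a_i$ for $i\notin S$, again with degree gap at least $2$. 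The $a_i$ are pairwise distinct, so the partial fraction decomposition is unique. We can therefore read off $F_1/F_0$ as the part of the right side supported on the poles indexed by $S$:
\[
\frac{F_1}{F_0}=\sum_{i\in S}\frac{r_i}{A_i},
\]
where $r_i$ is the residue coefficient of $1/A_i$ on the right side.

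Finally, we use the degree gap. Since $\deg F_1\le\deg F_0-2$, the fraction $F_1/F_0$ is $O(\lambda^{-2})$ as $\lambda\to\infty$. This forces the sum of its residues to vanish: $\sum_{i\in S}r_i=0$. Computing $r_i$, the edge $k$ contributes $-c_k/(a_{k+1}-a_k)$ at the pole of $A_k$ and $+c_k/(a_{k+1}-a_k)$ at the pole of $A_{k+1}$. Summing over $i\in S$ therefore gives
\[
\sum_{i\in S}r_i=\sum_k\frac{\mathbf 1_{k+1\in S}-\mathbf 1_{k\in S}}{a_{k+1}-a_k}\,c_k=\ell_S(c).
\]
Edges lying entirely inside $S$ cancel, which is why only edges in $\partial S$ survive. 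Hence $\ell_S(c)=0$.

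The main obstacle is justifying the degree bound $\deg_\lambda F_1\le|S|-2$. Lemma~\ref{lem:universal-support} gives exactly this bound ($\deg F_k\le d-2k$ with $k=1$), so here it is immediate. If one wanted to avoid that lemma, one could instead note directly that the coefficient of $\lambda^{d-1}$ in $F$ is $\sum_{i\in S}a_i$ plus terms of higher $t$-order, which the Newton support rules out. Equivalently, one can argue via the trace: the sum of the roots of $P_n$ in $\lambda$ is independent of $t$, and the roots of $F$ are analytic branches near $-a_i$, $i\in S$. From this viewpoint, $\ell_S(c)$ is exactly the $t$-derivative of the sum of those branches, which must vanish at $t=0$.
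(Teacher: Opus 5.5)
Your proof is correct and is essentially the paper's argument: the paper shows that $\sum_{p\in S}\text{Res}_{\lambda=-a_p} P^{[1]}_n/P^{[0]}_n$ vanishes by Lagrange-interpolating $F_1$ at the nodes $-a_p$, $p\in S$, and observing that the $\lambda^{|S|-1}$ coefficient is zero, which is the same computation as your partial-fraction identification of $F_1/F_0$ combined with $F_1/F_0=O(\lambda^{-2})$. Your closing aside is not needed; note, though, that constancy of the full trace of $P_n$ in $t$ does not by itself make the partial trace over $S$ constant, and that is exactly what Lemma~\ref{lem:universal-support} supplies.
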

\begin{proof}
Write $A_i=\lambda+a_i$ and $P^{[r]}_n=[t^r]P_n$, so that
\[ P^{[0]}_n=\prod_{i=1}^n A_i.\]
By the Leibniz expansion,
\begin{equation}\label{eq:Pwcoeff}
  P^{[1]}_n=-\sum_{i=1}^{n-1}c_i\prod_{j\neq i,i+1}A_j.
\end{equation}
Let \[F_0=\prod_{p\in S}A_p, \qquad G_0=\prod_{q\notin S}A_q,\] and suppose
\[ P_n=(F_0+tF_1+\cdots)(G_0+tG_1+\cdots). \]
Notice that
\begin{equation}\label{eq:wfactor}
  F_1G_0+F_0G_1=P^{[1]}_n.
\end{equation}
By Lemma~\ref{lem:universal-support}, we have $\deg F_1\le |S|-2$.
Evaluating \eqref{eq:wfactor} at $\lambda=-a_p$, $p\in S$, gives
\[  F_1(-a_p)=\frac{P^{[1]}_n(-a_p)}{G_0(-a_p)}. \]

The unique polynomial of degree at most $|S|-1$ interpolating $F_1$ through the values $\{ -a_i \mid i \in S \}$ is
\[  I(\lambda)=\sum_{p \in S} F_1(-a_p) L_p(\lambda), \]
where
\[  L_p(\lambda)  =  \prod_{q \in S \smallsetminus \{p\}}  \frac{\lambda + a_q}{a_q - a_p}  =  \frac{F_0(\lambda)}{(\lambda+ a_p)F'_0(-a_p)}.\]

Thus,
\[  I(\lambda)  =  \sum_{p \in S} F_1(-a_p)  \frac{F_0(\lambda)}{(\lambda+ a_p)F'_0(-a_p)}  =  \sum_{p \in S}  \frac{P^{[1]}_n(-a_p)}{G_0(-a_p)}  \frac{F_0(\lambda)}{(\lambda+ a_p)F'_0(-a_p)}.\]
Since $\deg F_1 \leq  |S|-2$, uniqueness of interpolation gives $I=F_1$. 
Consequently, the coefficient of $\lambda^{|S|-1}$ in $I$ vanishes.

As $\frac{F_0(\lambda)}{\lambda+a_p}$ is monic, $I(\lambda)$ has the coefficient of $\lambda^{|S|-1}$ given by
\[\sum_{p\in S}  \frac{P^{[1]}_n(-a_p)}{F'_0(-a_p)G_0(-a_p)}  =  \sum_{p\in S}  \frac{P^{[1]}_n(-a_p)}{(P^{[0]}_n)'(-a_p)}  = \sum_{p\in S}  \text{Res}_{\lambda = -a_p}  \frac{P^{[1]}_n(\lambda)}{P^{[0]}_n(\lambda)}.\]
Here $\text{Res}$ denotes the standard residue.

Notice that
\begin{equation*}
\begin{aligned}
\frac{ P_n^{[1]}}{ P_n^{[0]}}  &=  -\sum_{i=1}^{n-1} \frac{c_i}{A_iA_{i+1}},\\ 
  \text{Res}_{\lambda = -a_i} \frac{c_i}{A_iA_{i+1}}  &= \frac{c_i}{a_{i+1}-a_i},\\
  \text{Res}_{\lambda = -a_{i+1}} \frac{c_i}{A_iA_{i+1}}  &=  \frac{c_i}{a_i-a_{i+1}}.
\end{aligned}
\end{equation*}
In particular, if both $i$ and $i+1$ are in $S$, or if neither are in $S$, then
\[ \sum_{p\in S} \text{Res}_{\lambda = -a_p} \left(-\frac{c_i}{A_iA_{i+1}}\right)=0.\]
Otherwise, if $i \in S$ and $i+1 \not\in S$, we have
\[ \sum_{p\in S}\text{Res}_{\lambda = -a_p}\left(-\frac{c_i}{A_iA_{i+1}}\right) = - \frac{c_i}{a_{i+1}-a_i},\]
whereas if $i\notin S$ and $i+1\in S$, we have
\[ \sum_{p\in S}  \text{Res}_{\lambda = -a_p} \left(-\frac{c_i}{A_iA_{i+1}}\right) = -\frac{c_i}{a_i-a_{i+1}}.\]
Thus, this vanishing coefficient is exactly $\ell_S(c)=0$.
\end{proof}

\begin{Corollary}\label{cor:first-order-obstruction-hyperplanes}
If $\partial S$ has only one element, then $\ell_S(c)=0$ forces some $c_i=0$. For such $S$, there are no corresponding factorizations in $T_c$.
In particular, any proper Hensel set $S$ given by an initial or terminal interval (i.e. 
$\{1,\dots, r\}$ or $\{r,\dots, n\}$) can only occur if there is a cut.

Moreover, $\ell_{S^c}(c)=-\ell_S(c)$.
\end{Corollary}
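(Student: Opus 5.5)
The plan is to read off each assertion directly from the explicit formula for $\ell_S(c)$ in Theorem~\ref{thm:stratcontainment}. The key observation is that the coefficient of $c_k$ in $\ell_S(c)$ is
\[
\frac{\mathbf 1_{k+1\in S}-\mathbf 1_{k\in S}}{a_{k+1}-a_k}.
\]
This numerator is zero exactly when both or neither of $k,k+1$ lie in $S$. It equals $\pm1$ exactly when $k\in\partial S$. Hence
\[
\ell_S(c)=\sum_{k\in\partial S}\frac{\pm1}{a_{k+1}-a_k}\,c_k ,
\]
and each surviving coefficient is nonzero because the $a_i$ are pairwise distinct.

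First, suppose $\partial S=\{k\}$. Then $\ell_S(c)$ is the single term $\pm c_k/(a_{k+1}-a_k)$. Its vanishing, which Theorem~\ref{thm:stratcontainment} forces whenever a factor with Hensel subset $S$ exists, gives $c_k=0$. This contradicts working in $T_c$, so no such factorization exists in $T_c$.

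Second, for an initial interval $S=\{1,\dots,r\}$ with $1\le r\le n-1$, the only edge with exactly one endpoint in $S$ is $(r,r+1)$, so $\partial S=\{r\}$. The case $S=\{r,\dots,n\}$ with $2\le r\le n$ is symmetric, with $\partial S=\{r-1\}$. In both cases the previous step applies, so such a Hensel set can only occur after a cut.

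Third, for the antisymmetry, replacing $S$ by $S^c$ replaces each indicator $\mathbf 1_{j\in S}$ by $1-\mathbf 1_{j\in S}$. So each numerator $\mathbf 1_{k+1\in S^c}-\mathbf 1_{k\in S^c}$ equals $-(\mathbf 1_{k+1\in S}-\mathbf 1_{k\in S})$, and summing gives $\ell_{S^c}(c)=-\ell_S(c)$. This is consistent with the fact that the complementary monic factor has Hensel subset $S^c$.

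There is no real obstacle. The only points needing care are that distinctness of the $a_i$ keeps the denominators nonzero, and that properness of $S$ makes $\partial S$ nonempty.
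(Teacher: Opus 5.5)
Your proposal is correct and matches the paper's approach: the paper states this corollary without a written proof, as an immediate consequence of Theorem~\ref{thm:stratcontainment}. Your argument supplies exactly that reading: $\ell_S$ is a sum over $\partial S$ of nonzero multiples of the $c_k$, and complementing $S$ flips the sign of every indicator difference.
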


Next we show that the collection of $P_n$ that have a factor whose Hensel subset is $S$ at $t=0$ must be a proper subset of $\{\ell_S(c) = 0\}$.

\begin{Lemma}\label{lem:monicreduce}
Fix $a\in\CC^n$ and a subset $S\subset[n]$. Set
\[ F_{0,S}=\prod_{i\in S}(\lambda+a_i), \qquad G_{0,S}=\prod_{i\notin S}(\lambda+a_i).\]
The set of coupling vectors $c\in\CC_c^{n-1}$ for which there exist
monic polynomials $F,G$ satisfying
\[ P_n(t,\lambda)=F(t,\lambda)G(t,\lambda), \qquad F(0,\lambda)=F_{0,S},  \qquad  G(0,\lambda)=G_{0,S},\]
is Zariski closed.
\end{Lemma}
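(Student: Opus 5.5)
The plan is to show that the set is the image of a Zariski closed set under a projection that is proper, or more simply finite. Proper images of closed sets are closed.

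\textbf{Step 1: a bounded parametrization.} By Lemma~\ref{lem:universal-support}, any monic factor $F$ of $\lambda$-degree $d=|S|$ has the form
\[ F=\sum_{k}t^kF_k(\lambda), \qquad \deg_\lambda F_k\le d-2k, \]
so the total degree of $F$ is bounded. The same holds for $G$, with $n-d$ in place of $d$. Let the unknown coefficients of $F_k$ (for $k\ge1$) and of $G_k$ (for $k\ge1$) be affine coordinates $x\in\CC^N$. The $t^0$ parts are fixed as $F_{0,S}$ and $G_{0,S}$.

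\textbf{Step 2: the incidence variety.} Define
\[ Z=\{(c,x): P_n(t,\lambda;c)=F_xG_x\}\subset \CC^{n-1}\times\CC^N. \]
Comparing coefficients of $t^k\lambda^\ell$ shows that $Z$ is cut out by polynomial equations, so it is closed. The set in the statement is the projection of $Z$ to $\CC^{n-1}$, intersected with the coupling domain.

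\textbf{Step 3: properness of the projection.} This is the main obstacle. I would show that the projection $Z\to\CC^{n-1}$ is finite, by bounding the coefficients $x$ in terms of $c$. The key fact is that $F$ divides $P_n$ and is monic in $\lambda$. Over the algebraic closure of $\CC(t)$, or of $\CC[[t]]$, the roots of $F$ are a subset of the roots of $P_n$.

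Concretely, consider the projective closure in the $x$-variables, i.e.\ take a limit $x\to\infty$ with $c$ bounded. I would argue as follows.

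1. For each fixed $t_0$ and $c$, the monic factors $F(t_0,\lambda)$ and $G(t_0,\lambda)$ have roots among those of $P_n(t_0,\lambda)$.
2. Those roots are bounded by a continuous function of $(t_0,c)$, via the Cauchy bound for a monic polynomial.
3. Hence the coefficients of $F(t_0,\cdot)$ and $G(t_0,\cdot)$ are bounded locally uniformly in $c$.
4. Evaluating at $\deg F+1$ distinct values of $t_0$ and using Lagrange interpolation in $t$ bounds every coefficient $x$. This uses that the $t$-degree is bounded by Step 1.

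So the projection is proper in the Euclidean topology, with compact fibers over compact sets. Its fibers are finite, because a polynomial has only finitely many monic factorizations. Therefore the image is closed in the classical topology. Since it is also constructible by Chevalley's theorem, it is Zariski closed.

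Alternatively, one can say algebraically that the coefficients of $F$ are elementary symmetric functions in a subset of the roots of $P_n$. Those roots are integral over $\CC[c,t]$, so the coordinates $x$ are integral over $\CC[c]$. This makes $\CC[Z]$ integral over $\CC[c]$, so the projection is finite, hence closed.

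I would present the integrality version. The one thing to check carefully is that each coefficient of each $F_k$ is integral over $\CC[c]$ and not merely over $\CC[c,t]$. This follows because $F\in\CC[c]$-algebra$[t,\lambda]$ has coefficients in $\CC[t]$ that are integral over $\CC[c][t]$, and the coefficients of a polynomial in $t$ that is integral over $R[t]$ are integral over $R$. The last fact is a standard result: $R[t]$ integrally closed in $R'[t]$ when $R$ is integrally closed in $R'$.
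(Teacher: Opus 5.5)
Your argument is correct, but it gets properness by a different mechanism than the paper.

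The paper never bounds the factor coefficients. It compactifies them, working in $\CC_c^{n-1}\times\PP(V_d)\times\PP(V_{n-d})$ with the homogenized equations $fg=\ell_F(f)\ell_G(g)P_n$, $[t^0]f=\ell_F(f)F_{0,S}$, $[t^0]g=\ell_G(g)G_{0,S}$. Closedness of the image then comes for free from properness of projective space. The only check is that no point at infinity lies on the incidence: $f,g\neq0$ gives $fg\neq0$ in the domain $\CC[t,\lambda]$, which forces $\ell_F(f)\ell_G(g)\neq0$.

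You instead stay in the affine chart with the $t^0$-parts fixed. You show directly that $Z\to\CC^{n-1}$ is finite (or Euclidean-proper, then closed via Chevalley), using that roots of a monic factor are roots of the monic $P_n$. The paper's route is shorter and needs no auxiliary facts. Yours makes explicit why coefficients cannot escape to infinity, and it exhibits finiteness of the projection. The paper's incidence is also finite, since it is proper over $\CC_c^{n-1}$ yet contained in the affine chart $\ell_F\ell_G\neq0$.

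Two small points on your integrality version. First, $\CC[Z]$ need not be a domain and $\CC[c]\to\CC[Z]$ need not be injective, so ``roots'' should be taken in a splitting ring, or componentwise on $Z_{\mathrm{red}}$. Second, you can bypass the fact about integral closures in $R'[t]$ by making your Euclidean step 4 algebraic. For each fixed $t_0\in\CC$, $F(t_0,\lambda)$ and $G(t_0,\lambda)$ are monic factors of the monic $P_n(t_0,\lambda)\in\CC[c][\lambda]$, so their coefficients are integral over $\CC[c]$. Lagrange interpolation at finitely many $t_0$ then writes every coordinate of $x$ as a $\CC$-linear combination of these coefficients.
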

\begin{proof}
As factors are necessarily monic in $\lambda$, this follows from the standard projective factor-incidence construction, in the same spirit as the closedness argument used in~\cite{faust2026genericirreducibilityblochvarieties}.
In particular, let $d=|S|$, and suppose that $F$ and $G$ are factors of $P_n(t,\lambda)$, where $F$ has prescribed specialization $F_{0,S}$.
That is,
    \[ F_0=\prod_{i\in S}(\lambda+a_i),\qquad G_0=\prod_{i\notin S}(\lambda+a_i).  \]
By Lemma~\ref{lem:universal-support}, any factor of degree $d$ has support contained in \[ \Sigma_d=\{(r,j):j+2r\le d\},\] and the complementary factor has support contained in $\Sigma_{n-d}$.

Let $V_d$ and $V_{n-d}$ be the corresponding vector spaces of polynomials in $t,\lambda$, and let $\ell_F,\ell_G$ denote the coefficients of $\lambda^d$ and $\lambda^{n-d}$.
Consider the closed incidence variety
\[ \calX_S\subset \CC_c^{n-1}\times\PP(V_d)\times\PP(V_{n-d})\]
defined by
\[ fg=\ell_F(f)\ell_G(g) P_n(t,\lambda;c),\]
\[ [t^0]f=\ell_F(f)F_0,\qquad [t^0]g=\ell_G(g)G_0.\]
As the product $\PP(V_d) \times \PP(V_{n-d})$ is proper, so is the natural projection $ \CC_c^{n-1}\times\PP(V_d)\times\PP(V_{n-d}) \to \CC_c^{n-1}$. Its restriction to $\calX_S$ is also proper, and thus has closed image. This image is the collection of $c$ such that $P_n$ admits a factor $F$ with specialization $F_{0,S}$.

Indeed, a monic factorization gives a point of $\calX_S$.
Conversely, if $(c,[f],[g])\in\calX_S$, then $fg\neq0$, so $\ell_F(f)\ell_G(g)\neq0$.
Normalizing by these two nonzero leading coefficients gives monic factors $F,G$ with \[ FG= P_n,\qquad F^{[0]}=F_0,\qquad G^{[0]}=G_0.\] 
Thus, the prescribed-specialization locus is closed.
\end{proof}
\begin{Remark}
When the $a_i$ are pairwise distinct, the preceding locus is exactly the fixed-Hensel-subset locus associated with $S$. 
When diagonal values collide, the subset $S$ need not be unique; different subsets may determine the same specialization polynomial $F_{0,S}$.
\end{Remark}

\begin{Lemma}\label{lem:nonadjedges}
Assume that the $a_i$ are pairwise distinct.
Let $S$ be a nonempty proper subset such that neither $S$ nor $S^c$ has size $1$ and such that $\partial S$ has at least two elements.
Let $\mathcal{R}_S\subset \CC_c^{n-1}$ be the affine closed locus where $P_n(t,\lambda)$ has a factor with Hensel subset $S$, and set $H_S=\{\ell_S=0\}$.
Then $\mathcal{R}_S\neq H_S$.
\end{Lemma}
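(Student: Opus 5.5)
The plan is to exhibit one explicit point $c\in H_S$ with $c\notin\calR_S$. Since $\calR_S$ is Zariski closed by Lemma~\ref{lem:monicreduce}, a single such point already gives $\calR_S\neq H_S$. I will take the point in the closure of $T_c$ with most couplings cut, so that $P_n$ splits into pieces whose irreducible factors are easy to list.

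\textbf{Step 1: choose two nonadjacent boundary edges.} I claim $\partial S$ contains two edges $i<j$ with $j\ge i+2$. If $|\partial S|\ge3$, two of the edges are not consecutive, so such a pair exists. If $|\partial S|=2$ and the two edges are consecutive, say $\partial S=\{i,i+1\}$, then membership in $S$ changes only across the edges $(i,i+1)$ and $(i+1,i+2)$. This forces $S=\{i+1\}$ or $S^c=\{i+1\}$, which the hypothesis excludes.

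This is exactly where the size-one hypothesis is needed. For a three-vertex block with $S=\{2\}$, the constant-branch condition of Theorem~\ref{thm:constBranchClass} coincides with $\ell_S=0$, so the lemma genuinely fails in that case.

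\textbf{Step 2: build a point of $H_S$.} Set $c_k=0$ for all $k\notin\{i,j\}$. Restricted to this point,
\[
\ell_S(c)=\pm\frac{c_i}{a_{i+1}-a_i}\pm\frac{c_j}{a_{j+1}-a_j}.
\]
Both coefficients are nonzero because the $a$'s are distinct, so we can choose $c_i,c_j\neq0$ with $\ell_S(c)=0$. The chain then cuts into the two-vertex blocks $\{i,i+1\}$ and $\{j,j+1\}$, which are disjoint because $j\ge i+2$, together with isolated vertices. Hence
\[
P_n=(A_iA_{i+1}-tc_i)(A_jA_{j+1}-tc_j)\prod_{k\notin\{i,i+1,j,j+1\}}A_k .
\]

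\textbf{Step 3: show this point is not in $\calR_S$.} Each quadratic $A_iA_{i+1}-tc_i$ has degree one in $t$ with nonzero $t$-coefficient $-c_i$. Its $t^0$-part $A_iA_{i+1}$ is coprime to that constant, so the quadratic is irreducible; the same holds for the $j$-block.

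Now suppose $P_n=FG$ with $F,G$ monic and $F(0,\lambda)=\prod_{p\in S}A_p$. By unique factorization in $\CC[t,\lambda]$, $F$ is a product of some of the irreducible factors listed above. Setting $t=0$, either both $A_i$ and $A_{i+1}$ divide $F(0,\lambda)$, or neither does. Since the $a$'s are pairwise distinct, this means both of $i,i+1$ lie in $S$ or neither does. That contradicts $i\in\partial S$, so $c\notin\calR_S$ while $c\in H_S$.

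\textbf{Main obstacle.} The only delicate points are the combinatorial reduction in Step~1 and confirming that cut points $c_k=0$ are admissible. They are, because $\calR_S$ and $H_S$ are both taken as closed subsets of the full affine space $\CC_c^{n-1}$, not just of the torus $T_c$.
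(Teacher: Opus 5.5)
Your proposal is correct and follows the paper's proof essentially step for step. You pick two nonadjacent boundary edges (the size-one hypothesis rules out the adjacent case), set all other couplings to zero, and tune $c_i,c_j\neq 0$ so that $\ell_S=0$. A factor with Hensel subset $S$ would then have to separate $A_i$ from $A_{i+1}$ inside the irreducible quadratic $A_iA_{i+1}-tc_i$. Two of your additions are correct refinements of the same argument: one such quadratic already gives the contradiction, since the second boundary edge is only needed to make $\ell_S$ vanish; and your $n=3$, $S=\{2\}$ check shows that the size-one hypothesis is necessary.
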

\begin{proof}
First note that $\partial S$ has two non-adjacent $i$.
Indeed, otherwise $\partial S$ would only have two $i$ which are adjacent, and then $S$ or $S^c$ must be a single element.

Now choose $p$, $q$ to be two non-adjacent elements of $\partial S$.
The coefficients of $c_p$ and $c_q$ are then nonzero in $\ell_S$.
    
Let us now fix nonzero values for $c_p$ and $c_q$, with all other $c_i=0$, such that $\ell_S(c)=0$.
At this cut point, $P_n(t,\lambda)$ becomes a product of
\[ A_pA_{p+1}-tc_p,\qquad A_qA_{q+1}-tc_q \]
and constant linear factors.
Since $c_p,c_q \neq 0$, these two quadratics are irreducible.
Because $S$ contains exactly one endpoint of each of the two chosen boundary edges, no factor at this cut point can have Hensel subset $S$: such a factor would split both irreducible quadratics.
Thus, this point lies in $H_S\smallsetminus \calR_S$.
Therefore $\mathcal{R}_S\neq H_S$.
\end{proof}

\begin{Theorem}\label{thm:no-divisors-fixed-distinct}
Fix $n\ge4$, and pairwise distinct diagonal entries.
Then
\[ \text{codim}_{T_c}\{c\in T_c:P_n(t,\lambda)\text{ is reducible}\}\ge2. \]
Equivalently, over the pairwise-distinct diagonal locus, the only codimension $1$ components of reducible parameters are the hyperplanes $c_i = 0$.
\end{Theorem}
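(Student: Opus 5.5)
The reducible locus in $T_c$ decomposes as a finite union of Hensel-subset loci. By Lemma~\ref{lem:hensel-subsets}, any nontrivial factorization of $P_n$ on the pairwise-distinct stratum has a monic factor with a well-defined Hensel subset $S$, a nonempty proper subset of $[n]$. Hence
\[ \{c\in T_c: P_n \text{ reducible}\}=\bigcup_{S}\left(\calR_S\cap T_c\right), \]
and each $\calR_S$ is Zariski closed by Lemma~\ref{lem:monicreduce}. Since $\calR_S=\calR_{S^c}$, it suffices to show that no $\calR_S\cap T_c$ contains an irreducible component of codimension one. I will sort the subsets $S$ by the size of $\partial S$ and by whether $S$ or $S^c$ is a singleton.

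The first case is $|\partial S|=1$. By Corollary~\ref{cor:first-order-obstruction-hyperplanes}, the condition $\ell_S(c)=0$ forces some $c_i=0$, so $\calR_S\cap T_c=\emptyset$. Note that $|\partial S|=0$ is impossible for a proper nonempty $S$.

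The second case is when $S$ or $S^c$ is a singleton $\{i\}$, so that the factor is a constant branch $\lambda+a_i$. If $n$ is even, Corollary~\ref{cor:even-factors} rules out odd-degree factors, since no $c_i$ vanishes. If $n=2m+1$, Theorem~\ref{thm:constBranchClass} forces $i=m+1$ and imposes $m$ independent relations on the couplings. Corollary~\ref{cor:constant-branch-codim-distinct} then gives codimension $m$. For $n\ge5$ we have $m\ge2$, which suffices. The value $n=4$ is even, so it is already covered.

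The third case is every remaining $S$: neither $S$ nor $S^c$ is a singleton, and $|\partial S|\ge2$. By Theorem~\ref{thm:stratcontainment}, $\calR_S\subset H_S=\{\ell_S=0\}$. The hyperplane $H_S$ is genuine, because the coefficients of $c_p$ with $p\in\partial S$ are nonzero when the $a_i$ are distinct. Moreover $H_S$ is irreducible of codimension one in $\CC^{n-1}$. By Lemma~\ref{lem:nonadjedges}, $\calR_S\neq H_S$, so $\calR_S$ is a proper closed subset of an irreducible hypersurface and therefore has codimension at least two in $\CC^{n-1}$.

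The main obstacle is transferring this bound to $T_c$. Intersecting $\calR_S$ with the open torus can only lower dimension, so $\operatorname{codim}_{T_c}(\calR_S\cap T_c)\ge2$ follows directly. The real subtlety lies earlier: Lemma~\ref{lem:nonadjedges} exhibits its witness point off $T_c$, and only properness of $\calR_S$ inside the irreducible $H_S$ is needed. That argument requires $\calR_S$ to be closed in all of $\CC^{n-1}$, which Lemma~\ref{lem:monicreduce} provides. I will also verify that $H_S$ is not contained in a coordinate hyperplane, which holds since $\ell_S$ involves at least two variables. Taking the finite union over all $S$ yields the claimed codimension $\ge2$.
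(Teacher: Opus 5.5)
Your proposal is correct and matches the paper's proof essentially step for step. It uses the same three-way split of Hensel subsets: a single boundary edge (Corollary~\ref{cor:first-order-obstruction-hyperplanes}), a singleton $S$ or $S^c$ (Corollaries~\ref{cor:even-factors} and~\ref{cor:constant-branch-codim-distinct}), and all remaining $S$ via $\calR_S\subsetneq H_S$ (Theorem~\ref{thm:stratcontainment} and Lemma~\ref{lem:nonadjedges}), followed by the same finite-union conclusion.
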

\begin{proof}
By Theorem~\ref{thm:stratcontainment}, for each proper Hensel subset $S$, the corresponding collection of $c$ such that $P_n$ has a factor with Hensel subset $S$ is contained in $\{\ell_S=0\}$.
If $S$ has only one boundary edge, this forces some $c_i = 0$ and so this stratum misses $T_c$.

If $S$ or $S^c$ has size one, the factorization has a constant branch factor.
In even length this is impossible on the connected pairwise-distinct stratum by Corollary~\ref{cor:even-factors}.
In odd length, the collection of parameters with constant branch factors has codimension $\lfloor n/2\rfloor$ on this stratum by Corollary~\ref{cor:constant-branch-codim-distinct}.
For $n\ge4$, this is at least codimension $2$ in $T_c$.

Finally, all remaining $S$ satisfy the hypotheses of Lemma~\ref{lem:nonadjedges}.
Let $\calR_S\subset \CC_c^{n-1}$ be the affine closed fixed-$S$ locus from Lemma~\ref{lem:monicreduce}, and set $H_S=\{\ell_S=0\}$.
By Theorem~\ref{thm:stratcontainment},
\[ \calR_S\subseteq H_S.\]
The polynomial $\ell_S$ is a nonzero linear form in the coupling variables, hence $H_S$ is an irreducible affine hyperplane.
Since $\ell_S$ has at least two nonzero coordinate terms, $H_S$ is not contained in any coordinate hyperplane, and therefore $H_S\cap T_c$ is dense in $H_S$.
Lemma~\ref{lem:nonadjedges} gives $\calR_S\neq H_S$.
Therefore $\calR_S$ is a proper closed subset of the irreducible hyperplane $H_S$, and so it has codimension at least $2$ in $\CC_c^{n-1}$.
Intersecting with the open torus $T_c$ preserves this codimension bound.

As the reducibility locus is a finite union of all possible Hensel sets which are at least codimension $2$ in $T_c$, we conclude that the locus itself is at least codimension $2$ in $T_c$.
\end{proof}

\section{Generic Irreducibility for Fixed Diagonal Data}\label{sec:generic-fixed-diagonal}

In this section we prove the following.
\begin{Theorem}\label{thm:sec8main}
\begin{enumerate}
    \item Let $n=2m+1\ge3$ and $a$ be fixed. For generic $c \in T_c$, $P_n$ is reducible if and only if $a_1 = a_3 = \dots = a_{2m+1}$.
    \item Let $n=2m\ge4$ and $a$ be fixed. For generic $c \in T_c$, $P_n$ is reducible if and only if $a_1 = a_3 = \dots = a_{2m-1}$ and $a_2 = a_4 = \dots = a_{2m}$.
\end{enumerate}
\end{Theorem}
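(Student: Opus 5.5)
The plan is to prove the two implications separately: sufficiency by explicit factorizations, and necessity by a degeneration argument that reduces to shorter chains.

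\emph{Sufficiency.} For $n=2m+1$ with $a_1=a_3=\cdots=a_{2m+1}$, Theorem~\ref{Thm:constantsSpec} gives $\lambda+a_n\mid P_n$ for every $c$, so $P_n$ is reducible. For $n=2m$, suppose the odd-indexed entries equal $\alpha$ and the even-indexed entries equal $\beta$.
\begin{enumerate}
\item Reorder the basis as odd indices first, then even indices. This writes $J_n(w)+\lambda I$ in block form $\begin{pmatrix}(\lambda+\alpha)I & wC\\ wC^{T} & (\lambda+\beta)I\end{pmatrix}$, where $C$ is an $m\times m$ matrix.
\item Taking the Schur complement gives
\[P_n(t,\lambda)=\det\bigl((\lambda+\alpha)(\lambda+\beta)I-tCC^{T}\bigr)=\prod_{\mu\in\mathrm{Spec}(CC^T)}\bigl((\lambda+\alpha)(\lambda+\beta)-\mu t\bigr).\]
\item Since $m\ge2$, this is a product of at least two factors of positive $\lambda$-degree, so $P_n$ is reducible for every $c$.
\end{enumerate}

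\emph{Necessity: setup.} The reducible locus in $\CC^{n-1}_c$ is closed. It is the finite union, over all monic divisors $F_0$ of $\prod_i A_i$, of the prescribed-specialization loci of Lemma~\ref{lem:monicreduce}. So if $P_n$ is reducible for generic $c\in T_c$, it is reducible on all of $\CC^{n-1}_c$. By properness of the factor incidence $\calX_S$, we can moreover choose one $F_0$ and a degree $d$ that work for all $c$, including points with cuts. I would then degenerate to $c_{n-1}=0$ and to $c_1=0$. There $P_n=P_{1,n-1}A_n$ and $P_n=A_1P_{2,n}$, with the remaining couplings generic. The limits $(f,g)$ of the factor pair give a factorization of the degenerate polynomial with the same degrees and the same $F_0$.

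\emph{Necessity: odd $n$.}
\begin{enumerate}
\item Argue by induction on $n$. Suppose first that the shorter chain $P_{1,n-1}$ is generically irreducible.
\item Then the limiting factorization is forced to be $\{P_{1,n-1},A_n\}$. Hence the persistent factor has $\lambda$-degree $1$ and, by Lemma~\ref{lem:universal-support}, is $t$-independent; it is a constant branch $\lambda+\alpha$ for all $c$.
\item We then need $P_n(t,-\alpha)\equiv0$ identically in $c$. Take the $t^1$ coefficient $-\sum_i c_i\prod_{j\ne i,i+1}(a_j-\alpha)$, with $\alpha$ one of the $a_k$. Its vanishing as a polynomial in $c$ forces every product $\prod_{j\neq i,i+1}(a_j-\alpha)$ to vanish.
\item For $n$ odd this forces $a_j=\alpha$ along the odd positions. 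The higher $t$-coefficients, or equivalently the contraction recursion in the proof of Theorem~\ref{thm:constant-branches-fixed-couplings} applied with $c$ generic, should pin this down.
\end{enumerate}

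\emph{Necessity: even $n$.} Connected even-length factors have even degree by Corollary~\ref{cor:even-factors}. So a degeneration to an odd shorter chain $P_{1,n-1}$ that is generically irreducible would force degree $1$, which is a contradiction. Therefore $P_{1,n-1}$ and $P_{2,n}$ must both lie in the odd reducible pattern, i.e. $a_1=a_3=\cdots=a_{n-1}$ and $a_2=a_4=\cdots=a_n$. This is exactly the claimed pattern. The base cases $n=3,4$ would be checked directly.

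\emph{Expected obstacle.} The hard step is the induction when the shorter chain is itself generically reducible. The degeneration then only says that the degree and specialization of the persistent factor are compatible with the factorization of the shorter chain, which does not immediately yield a contradiction. This arises in the odd case when $n-1$ is even and of the alternating pattern. I would handle it in two ways.
\begin{itemize}
\item Use both degenerations $c_1\to0$ and $c_{n-1}\to0$ simultaneously, comparing the two constraints on $F_0$.
\item If needed, degenerate at an interior edge $c_k\to0$. This gives $P_{1,k}P_{k+1,n}$, whose pieces have lengths that can be chosen to break the pattern. The Newton-polytope constraints of Lemma~\ref{lem:odd-minkowski-summands} then force a contradiction on the degrees.
\end{itemize}
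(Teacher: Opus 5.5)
\textbf{Verdict: genuine gap in the odd-length step.} Your sufficiency argument matches the paper. Your setup is sound: one prescribed-specialization locus must be all of $\CC^{n-1}_c$, so you may specialize to cut points. Your even-length step is a valid variant of the paper's argument. Where the paper uses Lemma~\ref{lem:helper8} together with $P_{1,2m-2}(t,-a_{2m})\not\equiv0$, you exclude the degenerate degrees $1$ and $n-1$ via the even-degree constraint of Corollary~\ref{cor:even-factors}. Running both cuts then yields both parity conditions at once.

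The gap is that you leave the odd-length step open as the ``expected obstacle,'' with only tentative remedies. The missing idea is the same two-sided argument you already used for even $n$. Take $n=2m+1\ge5$.
\begin{enumerate}
\item If the cut $c_{n-1}=0$ does not force a linear factor, then $P_{1,2m}$ is generically reducible. By the even case, $a_1=a_3=\cdots=a_{2m-1}$.
\item If the cut $c_1=0$ does not force one either, then $P_{2,2m+1}$ is generically reducible, so $a_3=a_5=\cdots=a_{2m+1}$.
\item The two chains of equalities overlap at $a_3$, which gives the full odd pattern.
\end{enumerate}
Equivalently, as in the paper: if the odd class is nonconstant, at least one of $P_{1,2m}$ and $P_{2,2m+1}$ is not parity-constant. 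That block is therefore generically irreducible, and you degenerate on that side. So the case you worry about needs no separate treatment. Comparing constraints on $F_0$, or cutting at interior edges, is unnecessary, and the interior-cut plan is not an argument as stated.

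Second, in the case where the shorter chain is irreducible, your steps 3--4 fail as written. Identical vanishing of the $t^1$ coefficient does not force the odd pattern. For $n=5$ with $a_1=a_5=\alpha$ and $a_3\neq\alpha$, every product $\prod_{j\neq i,i+1}(a_j-\alpha)$ vanishes. Yet $[t^2]P_5(t,-\alpha)=c_1c_4(a_3-\alpha)\neq0$, so $\lambda+\alpha\nmid P_5$. Use the top coefficient instead:
\[
[t^m]P_{2m+1}(t,-\alpha)=(-1)^m\sum_{r=0}^m\Bigl(\prod_{q=1}^r c_{2q-1}\Bigr)\Bigl(\prod_{q=r+1}^m c_{2q}\Bigr)(a_{2r+1}-\alpha).
\]
Its coupling monomials are distinct, so vanishing identically in $c$ forces $a_{2r+1}=\alpha$ for every $r$. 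The paper does the equivalent computation after pinning $\alpha=a_n$ from the degeneration, using $P_n(t,-a_n)=-tc_{n-1}P_{1,n-2}(t,-a_n)$.
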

First we prove the right-hand side implies the left-hand side.
\begin{Lemma}\label{lem:parity-constant-reducible}
\begin{enumerate}
    \item Assume $n=2m+1\ge3$. For fixed $a$ and generic $c \in T_c$,
$P_n$ is reducible if
    \[      a_1 = a_3 = \dots = a_{2m+1}.    \]
    \item Assume $n=2m\ge4$. For fixed $a$ and generic $c \in T_c$,
$P_n$ is reducible if
    \[  a_1 = a_3 = \dots = a_{2m-1}   \qquad\text{and}\qquad     a_2 = a_4 = \dots = a_{2m}.   \]
\end{enumerate}
\end{Lemma}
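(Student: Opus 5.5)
Both parts will follow by exhibiting an explicit factor, valid for every $c\in T_c$ and not just generic $c$.

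For part (1), Theorem~\ref{Thm:constantsSpec} already does the work. Since $a_1=a_3=\cdots=a_{2m+1}$, the linear polynomial $\lambda+a_1$ divides $P_n(t,\lambda)$. For $n\ge3$ the cofactor has $\lambda$-degree $2m\ge2$, so $P_n$ is reducible for every $c$. To justify the Leibniz claim, I would argue with the matching formula of Lemma~\ref{lem:matching-support}, evaluated at $\lambda=-a_1$. There the factors $A_j$ vanish for every odd $j$. A matching on the path $[1,2m+1]$ covers at most $m$ of the $m+1$ odd vertices, since each edge covers exactly one odd vertex. So every monomial contains some vanishing $A_j$, and $P_n(t,-a_1)\equiv0$.

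For part (2), write $a_{\mathrm{odd}}=\alpha$ and $a_{\mathrm{even}}=\beta$. Set
\[ u=(\lambda+\alpha)(\lambda+\beta). \]
I will show that $P_{2m}$ is a polynomial of degree $m$ in the single variable $u$, with coefficients in $\CC[t]$, and then factor it over the roots in $u$.

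To see this, expand via matchings. On an even path, an unmatched set $[2m]\setminus V(M)$ contains equally many odd and even vertices. Indeed, the matched vertices are balanced between the two parities, and so is the whole vertex set. Hence each monomial of $P_{2m}$ has the form
\[ (-t)^{|M|}\Bigl(\prod_{e\in M}c_e\Bigr)u^{\,m-|M|}. \]
Therefore
\[ P_{2m}=\sum_{k=0}^m (-1)^k e_k(c)\,t^k u^{m-k}, \]
where $e_k(c)$ is the matching polynomial coefficient. This expression is homogeneous of degree $m$ in $(u,t)$, so over $\CC$ it factors as
\[ \prod_{j=1}^m (u-\mu_j t), \]
where the $\mu_j$ are the roots of $\sum_k (-1)^k e_k(c)x^{m-k}$. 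For $m\ge2$ this gives a nontrivial factorization into $m$ quadratics in $\lambda$. Hence $P_n$ is reducible for every $c$, in particular for generic $c\in T_c$.

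Equivalently, and this is how I would phrase it in the paper, conjugating $J_n(w)$ by the parity grading shows that $J_n(w)+\lambda I$ has a block structure. Its determinant is then $\det(u I-tBB^{T})$ for the $m\times m$ bidiagonal block $B$, which is the same factorization. The only real check is the parity-balance claim for unmatched sets in part (2), which I expect to be the main, though mild, obstacle. Nonconstancy of the factors is clear, since each factor $u-\mu_j t$ has $\lambda$-degree $2$ and $m\ge2$.
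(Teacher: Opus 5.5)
Your proposal is correct and matches the paper's proof: part (1) is exactly Theorem~\ref{Thm:constantsSpec}, and you usefully spell out the matching count that the paper leaves as ``immediate from the Leibniz expansion'' (each edge covers one odd vertex, so every term keeps a vanishing odd $A_j$). For part (2), the paper obtains the same factorization $P_{2m}=\prod_j\bigl((\lambda+\alpha)(\lambda+\beta)-\rho_j t\bigr)$ by reordering into odd and even vertices and taking a Schur complement, which is the block-form phrasing you give at the end; your parity-balance argument on unmatched sets is a combinatorial derivation of the same identity, identifying the $\rho_j$ as roots of the matching polynomial rather than as eigenvalues of $D^TD$.
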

\begin{proof}
Notice (1) follows from Theorem~\ref{Thm:constantsSpec}.

Let us now prove (2).
Suppose that $a_{2r-1} = \alpha$ and $a_{2r} = \beta$ for each $r=1,\dots, m$.
Let $A = \lambda + \alpha$ and $B = \lambda + \beta$.

By reordering the vertices in the Jacobi matrix by listing the odd vertices and then the even vertices, we have
\[ J_n(w)+\lambda I = \begin{pmatrix} A I_m & wD \\ w D^T & B I_m \end{pmatrix},\]
where $D$ is the weighted odd-even incidence matrix.
Taking the Schur complement gives
\[ \chi_n(w,\lambda)=\det(A B I_m - w^2 D^T D),\]
and hence, in the reduced variable $t=w^2$,
\[ P_n(t,\lambda)=\det(A B I_m - t D^T D).\]
If the eigenvalues of $D^TD$ are $\rho_1,\ldots,\rho_m$, then
\[P_{2m}(t,\lambda)=\prod_{i=1}^m(A B - \rho_i t). \]
\end{proof}

To prove the remaining direction of Theorem~\ref{thm:sec8main}, we need the following lemma.

\begin{Lemma}\label{lem:helper8}
Let $n \geq 2$. Assume that $P_n(t,\lambda)$ is irreducible for fixed couplings $(c_1,\dots,c_{n-1})\in \CC^{n-1}$, and assume that
\[P_{n-1}(t,-a_{n+1})\not\equiv 0. \]
Then, $P_{n+1}(t,\lambda)$ is irreducible for generic $c_n\in\CC$.
\end{Lemma}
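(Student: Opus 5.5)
The plan is to set $u=c_n$ and study the one-parameter family
\[
 F_u(t,\lambda)=P_{n+1}(t,\lambda)=A_{n+1}P_n(t,\lambda)-u\,t\,P_{n-1}(t,\lambda),
\]
where $P_n,P_{n-1}$ are fixed by the given $c_1,\dots,c_{n-1}$. First I will show that the set of $u\in\CC$ for which $F_u$ is reducible is Zariski closed. The argument is the projective factor-incidence construction of Lemma~\ref{lem:monicreduce}, with $u$ playing the role of the coupling parameter. For each splitting $d+(n+1-d)$ of the $\lambda$-degree, factors are monic in $\lambda$, and their supports lie in $\Sigma_d$ and $\Sigma_{n+1-d}$ by Lemma~\ref{lem:universal-support}, which does not need the $a_i$ distinct. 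The incidence variety
\[
\{(u,[f],[g]) : fg=\ell_F(f)\ell_G(g)F_u\}\subset \CC_u\times\PP(V_d)\times\PP(V_{n+1-d})
\]
is then closed, and its image in $\CC_u$ is closed. So either $F_u$ is irreducible for all but finitely many $u$, which is the claim, or it is reducible for every $u$. I will assume the latter and derive a contradiction.

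Under that assumption, some incidence variety $\calX_d$ surjects onto $\CC_u$. I will choose an irreducible curve in $\calX_d$ dominating $\CC_u$ and a branch over a punctured neighbourhood of $u=0$. Along it there are monic factorizations $F_u=G_uH_u$ whose coefficients are Puiseux series in $u$. By properness, and because monic factors of the monic $F_u$ have bounded coefficients, these factorizations converge as $u\to0$ to a factorization of $F_0=A_{n+1}P_n$ into monic factors of the same $\lambda$-degrees. Since $P_n$ is irreducible of $\lambda$-degree $n$, the only nontrivial monic factorization of $A_{n+1}P_n$ is $A_{n+1}\cdot P_n$. Hence $d\in\{1,n\}$, and after relabelling, $G_u$ is a monic linear factor with $G_0=A_{n+1}$. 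By Lemma~\ref{lem:universal-support}, a monic linear factor is independent of $t$, so $G_u=\lambda+\alpha(u)$ with $\alpha(u)\to a_{n+1}$.

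Next I will substitute $\lambda=-\alpha(u)$ into $F_u$, which gives the identity in $t$
\[
 (a_{n+1}-\alpha)\,P_n(t,-\alpha)=u\,t\,P_{n-1}(t,-\alpha).
\]
Comparing $t^0$-coefficients gives $(a_{n+1}-\alpha)\prod_{i\le n}(a_i-\alpha)=0$. So $\alpha(u)$ lies in the finite set $\{a_1,\dots,a_{n+1}\}$ for all small $u$, and by continuity $\alpha\equiv a_{n+1}$. The left side then vanishes identically, which forces $u\,t\,P_{n-1}(t,-a_{n+1})\equiv0$ for $u\neq0$. This contradicts the hypothesis $P_{n-1}(t,-a_{n+1})\not\equiv0$. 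Therefore the reducible locus is finite, and $P_{n+1}$ is irreducible for generic $c_n$.

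I expect the main obstacle to be the limiting step. It needs care in extracting an honest algebraic family of factorizations over a neighbourhood of $u=0$ from the incidence variety, and in justifying that the limit of the factors is a factorization of $F_0$ with the same degree split. The incidence variety is proper over $\CC_u$, so a curve through a point of the fibre over $u=0$ that dominates $\CC_u$ should give this, with the monic normalization $\ell_F\ell_G\neq0$ preserved in the limit because $F_0$ is monic of full degree. A secondary point is the case $n+1-d=1$, where the roles of the two factors are swapped, and the case where some $a_i=a_{n+1}$ with $i\le n$. Both are absorbed by the finite-set argument for $\alpha$, since the contradiction only uses $\alpha\equiv a_{n+1}$.
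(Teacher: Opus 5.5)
Your proposal is correct and follows essentially the same route as the paper. Both arguments use closedness of the reducible locus in $c_n$ via the projective factor-incidence varieties, degenerate to $c_n=0$ where $P_{n+1}=A_{n+1}P_n$ forces the split $1+n$, and conclude with the contradiction $P_{n+1}(t,-a_{n+1})=-tc_nP_{n-1}(t,-a_{n+1})\not\equiv0$.

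The difference is only technical. You follow the linear factor along an analytic branch and use continuity of $\alpha(u)$. The paper instead notes that the fibre of $\calX_d$ over $c_n=0$ is nonempty, and that $\CC$ is a finite union of the closed loci $\{c_n:(\lambda+a_i)\mid P_{n+1}\}$, so one of them is all of $\CC$. Specializing at $c_n=0$ then identifies that factor as $A_{n+1}$. This avoids the curve-selection step you flagged as the main obstacle.
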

\begin{proof} 
Suppose that $P_{n+1}$ is reducible for a generic choice of $c_n$, and let $A=\lambda+a_{n+1}$.
As in Lemma~\ref{lem:monicreduce}, for $d = 1, \dots, \lfloor \frac{n+1}{2} \rfloor$ form the projective incidence variety of degree $d$ and degree $n+1-d$ monic factor pairs, using Lemma~\ref{lem:universal-support} to keep the coefficient spaces finite-dimensional.
For each $d$, the image in $\CC$ is closed. As $P_{n+1}$ is generically reducible, this means that the union of these finitely many varieties must be the entire $\CC$. In particular, for some $d$ the image of the corresponding projective incidence variety is all of $\CC$. 

Thus, the degree-$d$ factor locus has a specialization at $c_n=0$.

At $c_n=0$, we have
\[ P_{n+1}=A P_n.\]
By assumption, $P_n$ is irreducible and $A$ is coprime to $P_n$.
Thus, $d$ must be $1$.
Hence $P_{n+1}$ has a linear factor for all $c_n \in \CC$.

By Lemma~\ref{lem:universal-support}, any monic linear factor is independent of $t$.
At $t=0$, this linear factor divides the fixed polynomial
\[ \prod_{i=1}^{n+1}(\lambda+a_i),\]
so its root must be one of the finitely many values $-a_i$.
Note that for each $a_i$, the locus of $c_n \in \CC$ such that $\lambda + a_i$ is a factor of $P_{n+1}$ is algebraic.
Thus, at least one of these linear factors must always be a factor of $P_{n+1}$.
Specializing at $c_n=0$, the only possible linear factor is $A$.
Hence, the generic linear factor is identically $A$, and $A$ would divide $P_{n+1}$ for generic $c_n$.
But
\[ P_{n+1}(t,-a_{n+1})=-tc_nP_{n-1}(t,-a_{n+1}),\]
which is not identically zero by assumption, giving us a contradiction.
\end{proof}
\subsection{Proof of Theorem~\ref{thm:sec8main}.}
\begin{proof}
We prove the claim by induction.

The base cases are clear.
When $n=2$, then
\[ P_2=(\lambda+a_1)(\lambda+a_2)-tc_1,\]
which is irreducible in $\CC[t,\lambda]$ for all $c_1\neq0$.
When $n=3$, then
\[ P_3=(\lambda+a_1)(\lambda+a_2)(\lambda+a_3)  -t(c_1(\lambda+a_3)+c_2(\lambda+a_1)). \]
If $a_1=a_3$, then $\lambda+a_1$ is a factor.
Conversely, a generic reducible cubic must have a linear factor, and by Lemma~\ref{lem:universal-support} this factor is independent of $t$.
Testing the possible roots $-a_1,-a_2,-a_3$ shows that, for generic nonzero $c_1,c_2$, this can only happen when $a_1=a_3$.
    
Now consider $n \geq 4$.
Let $n = 2m$ and suppose that the diagonal is not constant for at least one parity class.
    
If the odd class is nonconstant, then $P_{2m-1}$ is such that its odd entries are nonconstant, and thus it is irreducible for a generic coupling by induction.
Moreover, $P_{2m-2}(t,-a_{2m})\not\equiv0$, since an even-length continuant has a nonzero top matching coefficient independent of $\lambda$.
Thus, all conditions of Lemma~\ref{lem:helper8} are met, and we conclude that $P_n$ is generically irreducible.

If instead the even class is nonconstant, just repeat the same argument on the reverse Jacobi matrix with labeling $(a_n,\dots, a_1)$, $(c_{n-1},\dots, c_1)$.
    
Now suppose that $n=2m+1$ and the odd diagonal class $a_1,a_3,\ldots,a_{2m+1}$ is not constant.
At least one of the two even blocks
\[ P_{1,2m},\qquad P_{2,2m+1} \]
is not parity-constant in the sense of the even case.
Indeed, if both were parity-constant, then the odd entries of $P_{1,2m}$ would make $a_1=a_3=\cdots=a_{2m-1}$, and the even entries of $P_{2,2m+1}$ would make $a_3=a_5=\cdots=a_{2m+1}$, so all odd entries of the original chain would be equal.

First suppose $P_{1,2m}$ is not parity-constant.
By the even case of the induction, $P_{1,2m}$ is irreducible for a generic choice of $c_1,\ldots,c_{2m-1}$.
We claim that
\[ P_{1,2m-1}(t,-a_{2m+1})\not\equiv 0 \]
as a polynomial in $t$ and in the couplings $c_1,\ldots,c_{2m-2}$.
The coefficient of $t^{m-1}$ is a sum over maximum matchings of the path $1,\ldots,2m-1$.
These maximum matchings leave exactly one odd vertex $2r-1$ unmatched.
The corresponding coupling monomials are distinct, and after substituting $\lambda=-a_{2m+1}$, their coefficients are $a_{2r-1}-a_{2m+1}$, up to a common sign.
Hence, the coefficient can vanish identically only if
\[ a_1=a_3=\cdots=a_{2m-1}=a_{2m+1}, \]
contrary to the assumption that the odd class is not constant.
Thus, Lemma~\ref{lem:helper8}, applied to the prefix $P_{1,2m}$ and the terminal vertex $2m+1$, implies that $P_n$ is generically irreducible.

If $P_{1,2m}$ is parity-constant, then $P_{2,2m+1}$ is not parity-constant.
Apply the same argument to the reversed chain
\[ (a_{2m+1},a_{2m},\ldots,a_1), \qquad  (c_{2m},c_{2m-1},\ldots,c_1). \]
This proves generic irreducibility in the remaining odd case.
\end{proof}

\begin{Corollary}~\label{cor:inheritance}
Let \[\chi_n(w,\lambda)=P_n(w^2,\lambda).\]
For $n\ge3$, $P_n$ and $\chi_n$ have the same generic irreducibility
classification for every fixed diagonal vector.

More explicitly, if $n=2m+1$, then $\chi_n$ is generically reducible if and only if \[a_1=a_3=\cdots=a_{2m+1}. \]
If $n=2m \ge 4$, then $\chi_n$ is generically reducible if and only if 
\[ a_1=a_3=\cdots=a_{2m-1}, \qquad a_2=a_4=\cdots=a_{2m}.\]
\end{Corollary}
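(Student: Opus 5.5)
Since $\chi_n(w,\lambda)=P_n(w^2,\lambda)$, any factorization of $P_n$ in $\CC[t,\lambda]$ gives one of $\chi_n$ in $\CC[w,\lambda]$. So whenever $P_n$ is generically reducible, so is $\chi_n$. By Theorem~\ref{thm:sec8main}, this gives the ``if'' direction of both displayed classifications. It remains to show that when the parity condition fails, $\chi_n$ is irreducible for generic $c\in T_c$. Theorem~\ref{thm:sec8main} tells us that in this case $P_n$ is irreducible for generic $c$.

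For $n=2m+1$ odd, this is immediate from Corollary~\ref{cor:chi-from-P}. If $P_n$ is irreducible and $n$ is odd, then $\chi_n$ is irreducible. Intersecting the generic irreducibility locus of $P_n$ with the full torus $T_c$ gives generic irreducibility of $\chi_n$.

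For $n=2m\ge4$ even, Corollary~\ref{cor:chi-from-P} leaves one possibility: $\chi_n=f(w,\lambda)f(-w,\lambda)$ with $\deg_\lambda f=m$. I would rule this out generically using Lemma~\ref{lem:odd-w-repeated-values}. Such a splitting forces $f(0,\lambda)^2$ to divide $\prod_i(\lambda+a_i)$. So the diagonal multiset must be of the form $\{x_1,x_1,\dots,x_m,x_m\}$, every value having even multiplicity. If the $a_i$ do not have this shape, we are done. The main obstacle is the case where they do but the two parity classes are not each constant.

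In that case I would argue by degeneration, following Lemma~\ref{lem:helper8}. The set of $c$ for which $\chi_n$ has a monic factor of $\lambda$-degree $m$ is closed, by the incidence argument of Lemma~\ref{lem:monicreduce} with supports bounded as in Lemma~\ref{lem:matching-support}. So if it contained a dense subset of $T_c$, it would contain all of $\CC^{n-1}$, in particular the cut points. Lemma~\ref{lem:helper8} itself only lets us cut the last coupling. Setting $c_{n-1}=0$ gives $\chi_n=(\lambda+a_n)\chi_{n-1}$, where $\chi_{n-1}$ has odd length. I would choose the orientation (reversing the chain if needed) so that the odd class of $\chi_{n-1}$ is nonconstant; the case analysis in the proof of Theorem~\ref{thm:sec8main} shows this is possible. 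Then $\chi_{n-1}$ is irreducible for generic $c_1,\dots,c_{n-2}$, by the odd case already established. In the fixed-$(c_1,\dots,c_{n-2})$ family over $c_{n-1}\in\CC$, the closed degree-$m$ factor locus therefore specializes at $c_{n-1}=0$ to a degree-$m$ factor of $(\lambda+a_n)\chi_{n-1}$. The irreducible factors there have degrees $1$ and $n-1=2m-1$, so no degree-$m$ factor exists unless $m=1$, which is excluded since $n\ge4$. This contradiction shows that the conjugate-pair splitting is non-generic.

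The delicate point is the closedness and specialization step: the incidence variety must be built over the one-parameter family with $c_1,\dots,c_{n-2}$ generic, exactly as in Lemma~\ref{lem:helper8}. Once that is in place, the argument is uniform and never uses the multiset observation, which serves only as a sanity check.
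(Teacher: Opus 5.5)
Your proposal is correct and follows essentially the same route as the paper. You reduce via Lemma~\ref{lem:orbit} and Corollary~\ref{cor:chi-from-P} to a balanced $m+m$ conjugate-pair splitting in even length, then use closedness of the degree-$m$ factor locus to specialize to a cut ($c_{2m-1}=0$ or $c_1=0$, according to which parity class is nonconstant), where $(\lambda+a)\chi_{2m-1}$ with $\chi_{2m-1}$ generically irreducible has no degree-$m$ factor; as you note, the multiset observation from Lemma~\ref{lem:odd-w-repeated-values} is not needed.
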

\begin{proof}
For a fixed diagonal vector, suppose $P_n$ is generically irreducible but $\chi_n(w,\lambda)=P_n(w^2,\lambda)$ is generically reducible.
By Lemma~\ref{lem:orbit}, this can only happen when $n=2m$, and then $\chi_n$ has a balanced $m+m$ factorization
\[ \chi_n(w,\lambda)=F(w,\lambda)F(-w,\lambda).\]
The parity two-color diagonal locus is given by
\[ a_1=a_3=\cdots=a_{2m-1},\qquad a_2=a_4=\cdots=a_{2m}.\]

As $P_n$ is generically irreducible, our fixed diagonal vector must be away from this locus, and thus at least one parity class is nonconstant.

If the odd class is nonconstant, then by the odd case of Theorem~\ref{thm:sec8main} and Corollary~\ref{cor:chi-from-P}, the prefix $\chi_{1,2m-1}$ is generically irreducible.
Fix generic values of the prefix couplings for which $\chi_{1,2m-1}$ is irreducible. By the same argument as given in Lemma~\ref{lem:monicreduce}, but in $\CC[w,\lambda]$, a generic balanced $m+m$ factorization would persist after specializing $c_{2m-1}=0$.
But then
\[ \chi_{1,2m}=(\lambda+a_{2m})\chi_{1,2m-1},\]
whose only proper factor degrees are $1$ and $2m-1$, not $m$.
This is a contradiction.
If the even class is nonconstant, the same argument applies after cutting $c_1=0$ and using the suffix chain.
\end{proof}

\section{Dimension Bound on the Reducibility Locus}\label{sec:dimension-bound}
We are now ready to prove our main result, a lower bound on the codimension of the reducibility locus outside of the cut hyperplanes.

\begin{Theorem}\label{thm:no-divisors-full}
Let \[\calU_n=\CC^n_a\times T_c. \]
For $n\ge4$, the reduced reducible locus
\[ R_n^P= \{(a,c)\in\calU_n:  P_n(t,\lambda)\text{ is reducible in }\CC[t,\lambda]\} \]
has codimension at least $2$.
Moreover, the original reducible locus
\[R_n^\chi= \{(a,c)\in\calU_n: \chi_n(w,\lambda)\text{ is reducible in }\CC[w,\lambda]\} \]
also has codimension at least $2$.
\end{Theorem}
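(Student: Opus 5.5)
The plan is to bound $\dim R_n^P$ by stratifying the diagonal space $\CC^n_a$ and using the fiber-dimension theorem for the projection $\pi:\calU_n\to\CC^n_a$. The ambient space has dimension $2n-1$, so the goal is $\dim R_n^P\le 2n-3$.

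First, $R_n^P$ is Zariski closed in $\calU_n$, hence constructible. For each $d=1,\ldots,\lfloor n/2\rfloor$, form the incidence variety of monic factor pairs of $\lambda$-degrees $d$ and $n-d$ as in Lemma~\ref{lem:monicreduce}, now with $(a,c)$ varying. Lemma~\ref{lem:universal-support} keeps the coefficient spaces finite-dimensional and bounded by $d$ alone. Projectivizing the factor coefficients makes the projection to $\CC^n_a\times\CC^{n-1}_c$ proper, so the image is closed. Intersecting with $T_c$ shows that $R_n^P$ is closed in $\calU_n$. For an irreducible component $Z$ of $R_n^P$, $\dim Z\le \dim\overline{\pi(Z)}+\max_{a\in\pi(Z)}\dim(Z\cap\pi^{-1}(a))$. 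So it suffices to bound fiber dimensions over three strata of $\CC^n_a$.

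\emph{Stratum 1: the pairwise-distinct locus $U$.} Here Theorem~\ref{thm:no-divisors-fixed-distinct} gives fibers of dimension at most $n-3$. This stratum therefore contributes at most $n+(n-3)=2n-3$.

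\emph{Stratum 2: $D\smallsetminus\Pi$.} Let $D=\bigcup_{i<j}\{a_i=a_j\}$, of dimension $n-1$. Let $\Pi$ be the parity locus of Theorem~\ref{thm:sec8main}: all odd entries equal when $n$ is odd, and each parity class constant when $n$ is even. For $a\in D\smallsetminus\Pi$, Theorem~\ref{thm:sec8main} says $P_n$ is irreducible for generic $c\in T_c$. The fiber is then a proper closed subset of the irreducible torus $T_c$, hence of dimension at most $n-2$. This stratum contributes at most $(n-1)+(n-2)=2n-3$.

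\emph{Stratum 3: $\Pi$.} The locus $\Pi$ has dimension $m+1$ when $n=2m+1$, and dimension $2$ when $n=2m$. Bounding the fiber trivially by $n-1$ gives a contribution of at most $(m+1)+2m=3m+1\le 2n-3$ in the odd case with $m\ge2$, and $2+(n-1)=n+1\le 2n-3$ in the even case with $n\ge4$. This is the only place where $n\ge4$ is needed beyond the earlier results; for $n=5$, for instance, it gives $7=2n-3$ exactly. Since every $a$ lies in $U$, in $D\smallsetminus\Pi$, or in $\Pi$, every component of $R_n^P$ has dimension at most $2n-3$, that is, codimension at least $2$.

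For $R_n^\chi$, note that $R_n^P\subseteq R_n^\chi$. By Lemma~\ref{lem:orbit}, the extra points are those where $P_n$ is irreducible but $\chi_n=f(w,\lambda)f(-w,\lambda)$. Closedness follows from the same incidence construction in $\CC[w,\lambda]$. Over $U$, Lemma~\ref{lem:hensel-subsets} shows that $\chi_n$ is reducible exactly when $P_n$ is, so the fibers agree. Over $D\smallsetminus\Pi$, Corollary~\ref{cor:inheritance} gives generic irreducibility of $\chi_n$, so the fibers are again proper closed subsets of $T_c$. Over $\Pi$, the trivial bound is used as before. The same count then gives codimension at least $2$.

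The step needing most care is the fiber-dimension bookkeeping. In particular, the fiber over $a\in D\smallsetminus\Pi$ must be a proper closed subset of $T_c$, so that its dimension is at most $n-2$; this holds because $R_n^P$ is closed and generic irreducibility holds there. The rest is direct assembly of earlier results.
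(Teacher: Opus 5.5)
Your proposal is correct and follows essentially the same route as the paper. Both arguments use closedness via the projective factor-incidence construction, Theorem~\ref{thm:no-divisors-fixed-distinct} on the pairwise-distinct stratum, Theorem~\ref{thm:sec8main} and Corollary~\ref{cor:inheritance} for generic irreducibility off the parity locus, and a codimension count for what remains. The difference is only bookkeeping. You apply generic irreducibility over all of $D\smallsetminus\Pi$ and handle $\Pi$ by an explicit dimension count, whereas the paper discards every equality stratum of codimension at least two outright and uses generic irreducibility only over the single hyperplanes $a_p=a_q$; your explicit fiber-dimension accounting spells out what the paper leaves implicit.
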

\begin{proof}
Stratify $\CC^n_a$ by equality patterns of the diagonal entries. By the same projective factor-incidence argument as in Lemma~\ref{lem:monicreduce}, both reducible loci are closed. For the continuant,  Theorem~\ref{thm:no-divisors-fixed-distinct} gives us codimension at least $2$ over the pairwise-distinct diagonal stratum.

Now let $D_\pi$ be a diagonal equality stratum.
If $\text{codim}D_\pi\ge2$, then even if the whole family over $D_\pi$ were reducible, it would already have codimension at least $2$ in $\calU_n$.

Thus, only diagonal hypersurfaces of the form $a_p=a_q$ could produce divisors.
By Theorem~\ref{thm:sec8main}, generic reducibility over a fixed diagonal vector requires the relevant parity class to be constant: the odd class in odd length, and both parity classes in even length.
For $n\ge4$, no single equality hyperplane $a_p=a_q$ forces these parity-constant conditions; those conditions have codimension at least $2$ in the diagonal space.
Hence generic connected couplings over any single diagonal equality hyperplane give an irreducible continuant.
Therefore the reducible locus inside each such hyperplane is a proper closed subset, and thus has codimension at least $2$ in $\calU_n$.
Hence
\[\text{codim}_{\calU_n} R_n^P\ge2.\]

On the pairwise distinct stratum, Lemma~\ref{lem:hensel-subsets} identifies the $P_n$ and $\chi_n$ reducible locus. On each equality stratum, Corollary~\ref{cor:inheritance} allows the preceding argument to be repeated for $\chi_n$. 
\end{proof}

\section*{Acknowledgements}
This research was partially supported by NSF grant DMS-2052519.

\section*{Statements and Declarations}
{\bf Conflict of Interest} The author declares no conflicts of interest.

\vspace{0.2in}
{\bf Data Availability} Data sharing is not applicable to this article as no new data were created or analyzed in this study.

\bibliographystyle{alpha} 
\bibliography{main}

\end{document}